\newtheorem{theorem}{\bf Theorem}[section]
\newtheorem{corollary}{\bf Corollary}[section]
\newtheorem{example}{\bf Example}[section]
\newtheorem{lemma}{\bf Lemma}[section]
\newtheorem{remark}{\bf Remark}[section]
\begin{document}
\begin{center}
An effective criterion for a stable factorisation of strictly nonsingular $2\times2$ matrix functions. Utilisation of the 
{\sf ExactMPF} package.
 
\end{center}
\begin{center}
 N.\,V. Adukova$^{1}$, V.\,M. Adukov$^{2}$,  G. Mishuris$^{1}$
\end{center}
\begin{center}
$^{1}$Department of Mathematics, Aberystwyth University,\\ SY23 3BZ Aberystwyth, UK\\
{$^{2}$Institute of Natural Sciences and Mathematics,\\ South Ural State University, 454080 Chelyabinsk, Russia\\
}
\end{center}

\begin{abstract}
In this paper, we propose a method to factorise of arbitrary strictly nonsingular $2\times 2$ matrix functions  allowing for stable factorisation. For this purpose, we utilise 
the {\sf ExactMPF} package working within the Maple environment previously developed 
by the authors and performing an {\it exact} factorisation of a nonsingular polynomial matrix function. 
A crucial point in the present analysis is the evaluation of a stability region of the canonical factorisation of the polynomial matrix functions. This, in turn, allows us to propose a sufficient condition for the given matrix function admitting stable factorisation.      
\end{abstract}

\vspace{0.7cm}
\section{ Introduction and outline of the main result} 

Factorisation of matrix functions  is a challenging task when it comes to its practical implementation. Indeed, any available numerical method is only justified under the condition that partial indices of the matrix function in question are stable {\cite{GK58}}. On the other hand, there is no general explicit criterion allowing to determine those indices {\cite{RM}}.

Factorisation of matrix functions plays the central role in various applications, e.g. integration of nonlinear differential equations by the inverse scattering method 
\cite{TF, AM}, in the theory of the Markushevich
problem on the unit circle \cite{Litvinchuk} and in scattering and diffraction of elastic waves in bodies with obstacles \cite{KAMR,Ab87, PA, K18, PKA}. 


%

Most of the problems discussed in applications refers to vectorial Wiener--Hopf problems with $2\times 2$ matrix functions 
\begin{equation}\label{A(t)}
A(t) = 
\begin{pmatrix}
a_{11}(t) & a_{12}(t) \\
a_{21}(t) & a_{22}(t)
\end{pmatrix}.
\end{equation}
It is known \cite{CG, LS, GF71, Pr} that a continuous invertible matrix function $A(t)$ admits a {\it right Wiener--Hopf factorisation}
\begin{equation}
 \label{e-fact}
A(t)=A_{-}(t)D(t)A_{+}(t), \ t \in \mathbb T,  
\end{equation}
where $A_{+}(t)$ and $(A_{-}(t))$ are continuous matrix functions on
$\mathbb T$ that can be extended analytically to the domains
${\cal D}_{+} = \left\{ z \in \mathbb C :\ |z|<1 \right\}$ and 
${\cal D}_{-} =
\left\{z \in \mathbb C \cup \{\infty\} :\ |z|>1 \right\}$, and are
invertible in the respective domain. 
Factor $D(t)$ is the diagonal matrix function
$D(t)={\rm diag}[t^{\varkappa_{1}},\ t^{\varkappa_{2}}]$. Integers $\varkappa_1\leq \varkappa_2$ are called the
{\it right partial indices} of $A(t)$ and they are unique. In contrast, the factors are not unique. We discuss possible normalisation that guarantees the uniqueness in the following.

Factorisation~(\ref{e-fact}) is said
to be {\it canonical} if all partial indices are equal to zero ($D(t)=I_2$).  In virtue of the 
Gohberg-Krein-Bojarskii criterion \cite{GK58}, a nonsingular matrix function $A(t)$ admits the stable factorisation if and only if $\varkappa_2-\varkappa_1\leq 1$. 
For example, matrix function with positive definite
real component admits the canonical factorisation
\cite{GK58}, while a matrix function not
containing zero in its numerical range has the partial indices equal to each
other \cite{Sp74}. Hence, both classes admit stable factorisations.

We call a matrix function to be {\it strictly} nonsingular $2\times 2$ on the unit
circle $\mathbb T$ if the following conditions satisfy
\begin{equation}
a_{11}(t)\neq 0,\  \det A(t)\neq 0,\  t \in \mathbb T. \label{e-strnons}
\end{equation}
Note that matrix functions with positive definite real components and matrix
functions not containing zero in their numerical ranges satisfy
conditions (\ref{e-strnons}) automatically 
\cite[\textit{Chapter II, section 6}]{CG} and \cite[\textit{Chapter II, section 1.3}]{Pr}, but not vice versa.
Some results on the partial indices of strictly nonsingular matrices were obtain in 
\cite{Adukov95}. 
Recently, an effective algorithm to construct an approximate numerical factorisation of a matrix function being arbitrary close to a given $2\times2$ strictly nonsingular matrix function has been proposed in \cite{ES}.  However, to the best of the author\rq{}s knowledge, there is still no rigorous proof available to attribute the constructed approximation to the given matrix function.  

In this paper, we close this gap. 
Namely, we develop a method allowing to determine and prove whether a {\it strictly} nonsingular  $2\times 2$  matrix function possesses stable set of partial indices and simultaneously to perform its approximate factorisation. 
A crucial point of our analysis is a development of efficient condition allowing to determine the stability region (where the approximate matrix functions preserve their {\it stable} partial indices). 
As approximation, we use polynomial matrix functions. This allows us to utilise  
the {\sf ExactMPF} package working within the Maple Software environment \cite{Maple} developed 
by the authors in \cite{AAM22}. It is worth to note that the paper \cite{AAM22} was based on the method of essential polynomials \cite{Adukov92}. This powerful tool allows for simultaneous left and right side fuctorisation of the given analitical matrix function. The method, however, is involved being difficult for realisation for external users. For this reason, it was implemented in software as the {\sf ExactMPF} package working in Maple Software \cite{Maple} environment where the factorisation process is fully automated. {\sf ExactMPF} can be used as a tool for numerical experiments with matrix factorisation and in any applications requiring factorisation of polynomial matrix functions. The listing of the package can be found in the Supplementary Material of \cite{AAM22}.
Below we outline the main idea of our approach.

Let $A(t)$ be an arbitrary invertible matrix function from 
the Wiener algebra 
$W^{2\times 2}$ and ${\cal EF}\subset W^{2\times 2}$ a class of matrix functions admitting an explicit solution of the factorisation problem. By the explicit solution of the factorisation problem we understand a clearly defined algorithmic procedure that definitely terminates after a finite number of steps. Additionally, we note that if a) the input data belongs to the Gaussian field  $\mathbb{Q}(i)$ of complex rational numbers and b) all (finite) steps of the explicit algorithm can be performed in the rational arithmetic then we say that the problem can be solved exactly \cite{AAM22}. 
Examples of the class ${\cal EF}$ are a class of
Laurent matrix polynomials \cite{Adukov99, GLR}, a class of meromorphic matrix functions \cite{Adukov92}, or  
a class of triangular $2\times 2$ matrix functions defined in \cite{AMR20}.

Now, suppose we have approximated $A(t)$ by $\widetilde A(t)\in {\cal EF}$ such that $\|A-\widetilde A\|_W<\delta$ and $\widetilde A(t)$ admits
the canonical factorisation $\widetilde A(t)=\widetilde A_-(t)\widetilde A_+(t)$. 
Since the canonical factorisation is stable, there exists
a neighbourhood of $\widetilde A(t)$ consisting of matrix functions admitting the canonical factorisation.  
Assume we can obtain an explicit estimate of the radius $R$ of such a neighbourhood 
as $R\leq \widetilde r_{cf}$, where the formula for $\widetilde r_{cf}$ will be discussed below. 
If $\delta<\widetilde r_{cf}$ then $A(t)$ also admits
the canonical factorisation (see Figure \ref{fig_expl}). Similar reasoning can be applied to a stable factorisation ($\rho_2-\rho_1=1$) where $\widetilde r_{cf}$ will have a different representation though.
Note that the both estimates depend on the normalisation of matrix functions during the approximation as well as the factorisation procedures. 

\begin{figure}[h]
\centering
\includegraphics[width=0.7\linewidth]{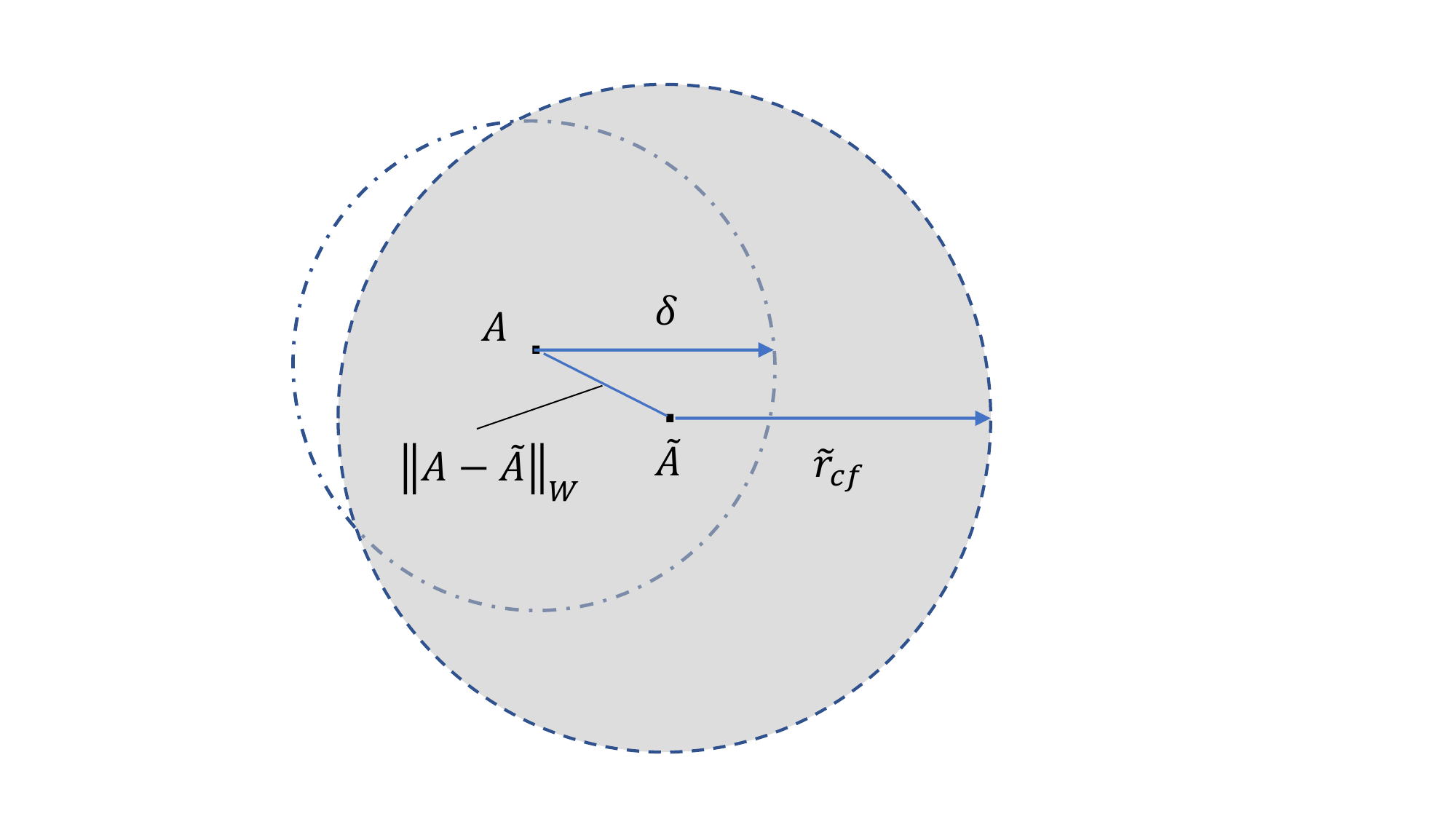}
\caption{Neighbourhoods of  $A$ and $\widetilde A$ for which  $A(t)$ expected to admit the canonical factorisation. If $\widetilde A$ admits a {\it explicit} canonical factorisation, then two inequalities:
$\| A-\widetilde A\|_W< \delta $ and $\delta <\tilde r_{cf}$ together guarantee that $A$ also possesses the same property. 
The goal of this paper is to deliver a tool to effectively verify the conditions.}
  \label{fig_expl}
\end{figure}

We note that factorisation of 
an arbitrary strictly nonsingular matrix function $A(t)$ from (\ref{A(t)}) can be explicitly reduced    
to factorisation an auxiliary (invertible) matrix function in the form (see section~\ref{pre} for details):
\begin{equation}
a(t) = \begin{pmatrix}\label{aform}
1 & \beta_{-}(t) \\
\alpha_{+}(t) & t^{\theta}+\alpha_{+}(t) \beta_{-}(t)
\end{pmatrix},
\end{equation}
where $\theta={\rm ind} \det a(t)= {\rm ind} \det A(t)-2\,{\rm ind}\,  a_{11}(t)$. 

The approach, highlighted above for matrix function $A(t)$ in its general form, can be applied to the matrix function $a(t)$. As a class $\cal EF$, we take Laurent matrix polynomials in the form:
\begin{equation}
\label{a_N}
a_N(t) = \begin{pmatrix}
1 & \beta_{-}^{(N)}(t) \\
\alpha_{+}^{(N)}(t) & t^{\theta}+\alpha_{+}^{(N)}(t) \beta_{-}^{(N)}(t) 
\end{pmatrix},
\end{equation}
where $\alpha_{+}^{(N)}(t)$, $\beta_{-}^{(N)}(t)$ are the Laurent polynomials of degree $N$ of the functions 
$\alpha_{+}(t)$, 
$\beta_{-}(t)$.

To obtain the explicit estimate of $\|a-a_N\|_W$,
we assume that $a(t)$ is analytical in some annulus containing
the unit circle $|t|=1$. In this case, the matrix Fourier coefficients of $a(t)$ coincide with the Laurent coefficients of $a(t)$ and we can use the Cauchy inequalities \cite{Ah} for the latter.

The factorisation of $a_N(t)$ can be explicitly constructed
 by the method of essential polynomials (see \cite{Adukov99, Adukov92, Adukov98}). Moreover, if the Laurent coefficients of $\alpha_+^{(N)}(t)$, $\beta_-^{(N)}(t)$ belong  to the field $\mathbb Q(i)$, then the factorisation problem for $a_N(t)$ can be solved {\it exactly} \cite{AAM22} with help of the package 
{\sf ExactMPF}. 
If the coefficients do not belong the field $\mathbb Q(i)$, we can find their rational approximations and to apply {\sf ExactMPF}. The use of the error-free calculations performed with the package is the  key idea of the approach as it guarantees exact computation of the partial indices. 

The paper is organised as follows. In the Section~\ref{pre}, we present important facts about the matrix Wiener algebra and the Teoplitz operators used in the following, discuss possible normalisation of the factorisation $A(t)$ and finally show the reduction of an arbitrary strictly nonsingular matrix function \eqref{A(t)} to \eqref{aform}.  In the section \ref{appro}, we give the approximation of the matrix function $a(t)$ and estimate its accuracy. Section \ref{cri} contains the main result of this article, namely a criterion of the stable factorisation for $a(t)$. In the section \ref{canon}, we obtain the explicit estimates for the absolute error in the approximate calculation of the factorisation factors. Finally, in the section \ref{exp}, we present some numerical results highlighting three different cases: canonical factorisation, a factorisation with the equal partial indices and a stable factorisation. Supplementary material collects full set of the data in form of tables.

\section {Preliminary considerations} 
\label{pre}

\subsection{Main definitions and known facts}
\label{main_def}

In this section, we provide some important definitions and follow the notation from \cite {CG, LS, GF71, GKS}.

Let $W^{p \times p}$ be the ${p\times p}$ matrix Wiener algebra consisting of matrix functions with entries from $W$.
Thus any $A(t)\in W^{p\times p}$ expands into  an absolutely convergent matrix Fourier series, $A(t)=\sum_{k=-\infty}^\infty A_kt^k$, such that $\sum_{k=-\infty}^\infty \|A_k\|<\infty$.
Here, $A_k$ belong to the algebra ${\mathbb C}^{p\times p}$ complex matrices   
equipped with the norm represented by any matrix multiplicative norm on
${\mathbb C}^{p\times p}$, preserving unity.
Algebra
$W^{p\times p}$ becomes a Banach algebra if we endow it with a norm
$\|A\|_W=\sum_{k=-\infty}^\infty \|A_k\|_1$,
 where $\|\cdot\|_1$
is the maximum column sum matrix norm. Also we denote a group of invertible elements of the algebra $W^{p \times p}$ by $GW^{p \times p}$.

If
$A_k=(a_{ij}^{(k)})_{i,j=1,...,p}$, where $a_{ij}^{(k)}$ are the Fourier
coefficients of the element $a_{ij}(t)$ of the matrix function
$A(t)$, then it is easily seen that $\|A_k\|_1\leq
\sum_{i,j}|a_{ij}^{(k)}|$. Then $\|A\|_W\leq
\sum_{i,j}\|a_{ij}\|_W$.
Let us define 
\begin{multline*} W_{+}^{p\times p} = \biggl\{ A(t) \in W^{p\times p}: 
A(t) = \sum_{k=0}^{\infty}A_kt^k  \biggr\}, \\ 
W_{-}^{p\times p}= \biggl\{ A(t) \in W^{p\times p}: A(t) = \sum_{k=-\infty} ^{0}A_kt^k\biggr\},\\  
(W_{-}^{p\times p})_0 = \biggl\{ A(t) \in W^{p\times p}:
A(t) = \sum_{k=-\infty} ^{-1}A_kt^k\biggr\}. 
\end{multline*} 
It is known that
$W_{\pm}^{p\times p}$, $(W_-^{p\times p})_0$ are closed subalgebras of $W^{p\times p}$ and
$$
W^{p\times p} = W_{+}^{p\times p}\ \oplus\ (W_{-}^{p\times p})_0. 
$$
Thus $W_{\pm}^{p\times p}$ is a decomposing algebra.

By ${\cal P}_{+}$ we denote a projector from $W^{p\times p}$ onto 
$W_{+}^{p\times p}$ along $(W_{-}^{p\times p})_0$ and ${\cal P}_{-} = {\cal I}-{\cal P}_{+}$. Here ${\cal I}$ is the identity operator. 
Operator ${\cal P}_{+}$ acts according to the rule
$$
{\mathcal{P}}_{+} \sum\limits_{k=-\infty }^{\infty }{A_{k}{t}^{k}}=\sum\limits_{k=0}^{\infty }{A_{k}{t}^{k}}.
$$
and is a linear bounded operator in $W^{p\times p}$, while $\|\mathcal{P}_{+}\|= 1$.

Let $A(t)\in W^{p\times p}$. On the Banach space $W_{+}^{p\times p}$ we define the operator $T_{A}$ acting according to the rule
$$
T_{A} X(t)= \mathcal{P}_{+} A(t) X(t).
$$

It is obvious that $T_{A}$ is a linear bounded operator and 
$\|T_{A}\| \le \|A\|_{W}$. $T_A$ is called {\it the Toeplitz operator with the matrix symbol $A(t)$}.
Note that even though the Toeplitz operator is usually considered on the Banach space
 $W_{+}^{p\times 1}$, for us it is more convenient to operate on $W_{+}^{p\times p}$. 
It is straightforward to prove the {\it partial multiplicativity} property of the mapping $A\rightarrow T_A$, that is,
for any $A_{\pm }\in {W_{\pm }}^{p\times p}$ 
and $B\in W^{p\times p}$, the relations
$$
T_{B A_{+}}=T_{B} T_{A_{+}}, \ \  T_{A_{-} B}=T_{A_{-}} T_{B}
$$
are valid.
As a result, all properties of the standard Toeplitz operator are preserved. 

We will also need the following two statements.
 
If $A(t)=A_-(t)D(t)A_+(t)$ is the right Wiener--Hopf factorisation 
of an invertible matrix function $A(t)$, then the Toeplitz operator
$T_A$ is the right (left) invertible if and only if all right  partial
indices are non-positive (non-negative). In this case
$T_A^{(-1)}=T_{A_+^{-1}}T_{D^{-1}}T_{A_-^{-1}}$ is its one-sided inverse
\cite[ch.VIII, Cor. 4.1]{GF71}

If a linear bounded operator ${\cal A}$ is one-side invertible, and 
${\cal A}^{(-1)}$ is its one-side inverse, then any operator
$\widetilde{\cal A}$ satisfying the inequality 
\begin{equation}
\label{norm}
\|{\cal A}- \widetilde{\cal A}\|<\frac{1}{\|{\cal A}^{(-1)}\|}
\end{equation}
is also one-side invertible (from the same side as ${\cal A}$).
Moreover, if ${\cal A}$ is one-side invertible but not invertible, then
$\widetilde{\cal A}$ is also not invertible \cite[ch.II, Th. 5.4]{GK73}. Note that (\ref{norm}) is true in any elements of an abstract Banach algebra.

\subsection{Normalised factorisation of matrix functions}

Normalisation  of the factorisation plays a crucial role when performing factorisation numerically. This is specifically in case when there is a need to compare the consequent approximations.
Usually, a normalisation is chosen ad hock in a line with chosen numerical/asymptotic procedure without any justification and a success depends on each specific case (see for example \cite{RM14, RM16, RM18}). 

Unfortunately, being important, this issue still has not been fully resolved. The effective results are known for:
a) $2 \times 2$ matrix functions  \cite{Adukov22}; for matrix functions of arbitrary dimension $p \times p$ ($p>2$), when b) the set of partial indices are stable \cite{Adukova22} or c) all partial indexes are different \cite{Ad22}.

Since the paper deals with $2\times2$ matrices, we present here the normalisation only for such case. 
Two cases should be distinguished.
\begin{itemize}
\item
If  matrix function admits a factorisation  
$A(t)=t^{\varkappa_1}A_-(t)A_+(t)$, ($\varkappa_1=\varkappa_2$) then it can be 
trivially normalised by the condition $A_-(\infty)=I_2$, where $I_2$ is the unit matrix, and 
such factorisation is unique.
\item
In general case ($\varkappa_1<\varkappa_2$), there exists the so-called 
$P$-normalised factorisation of $A(t)$ that guarantees its uniqueness \cite{Adukov22}. The type of normalisation is determined by a $2\times 2$ permutation matrix $P$,
$P=I_2$ or 
$P=J_2:=\begin{pmatrix}
0&1\\
1&0
\end{pmatrix}$. 
\end{itemize}

The latter can be written in an explicit form: 
if $\bigl(A_-(\infty)\bigr)_{11}\ne 0$ 
(or $\bigl(A_-(\infty)\bigr)_{21}\ne 0$), then $A(t)$ admits the $I_2$-normalised (or $J_2$-normalised) factorisation \cite[Th. 2.1]{Adukov22}.

In a particular case of the stable factorisation ($\rho_2-\rho_1=1$) when $\bigl(A_-(\infty)\bigr)_{11}$ $\ne 0$, 
the $I_2$-normalisation is carried out as follows 
(see more details in \cite{Adukov22,Adukova22}). 
Let $A_-(t)=A_0+A_1 t^{-1}+\ldots$, and $A_0=L_0U_0$ is $LU$-factorisation of the limiting matrix function
$A_0=A_-(\infty)$. Define a matrix polynomial $Q_-(t) = Q_0 +Q_1t^{-1}$ such that  
\begin{equation}\label{Inorm} 
Q_0=U_0^{-1},\ \ Q_1=-\frac{1}{(A_0)_{11}}\begin{pmatrix}
0&\bigl(A_1U_0^{-1}\bigr)_{12}\\
0&0
\end{pmatrix}.
\end{equation}

Let $C_-(t)=A_-(t)Q_-(t)$, $C_+(t)=D^{-1}(t)Q_-^{-1}(t)D(t)A_+(t)$.
Then $A(t)=C_-(t)D(t)C_+(t)$ is the sought for $I_2$-normalised factorisation of $A(t)$.
$J_2$-normalised factorisation constructed analogously by the $JLU$-factori\-sa\-tion of $A_0$.

Finally, $P$-normalisation is stable under a small perturbation $A(t)$, that is, for arbitrary sufficiently
small $\delta>0$, each matrix function $\widetilde A(t)$ possessing the same set of the right partial indices $\rho_1,\rho_2$, as $A(t)$ and satisfying the inequality $\|A-\widetilde A\|_W<\delta$ has the same type of the $P$-normalisation as $A(t)$ \cite[Th. 4.1]{Adukov22}.

\subsection{Reduction of matrix function $A(t)$ to the form $a(t)$}
\label{represent}

In this subsection, we deliver a representation of the matrix function $A(t)$ given in the form  \eqref{A(t)} allowing to 
consider a simpler matrix function, $a(t)$, in the form \eqref{aform}.
Let
\begin{equation}
\label{minors}
a_{11}(t) = a_{11}^{-}(t)t^{\varkappa}a_{11}^{+}(t), \ \
\Delta(t) =\Delta_{-}(t)t^{\theta+2\varkappa}\Delta_{+}(t) 
\end{equation} 
be the
Wiener--Hopf factorisations of the functions $a_{11}(t)$ and
$\Delta(t) = \det A(t)$, respectively. 
Not that the indices $\varkappa$ and $\theta+2\varkappa$ in \eqref{minors} do not coincide, generally speaking, with the pair $\varkappa_1$, $\varkappa_1+\varkappa_2$ (compare \eqref{e-fact}). The only relationship between them is: 
$\varkappa_1+\varkappa_2=\theta+2\varkappa$. 

The matrix function $A(t)$ can be equivalently written in the form: 
\begin{equation}
\label{rep1}
A(t)=
t^{\varkappa}
\begin{pmatrix}
a_{11}^-(t)&0\\
0&\frac{\Delta_-(t)}{a_{11}^-(t)}
\end{pmatrix}
\begin{pmatrix}
1&\beta(t)\\
\alpha(t)&\frac{a_{22}a_{11}^-a_{11}^+}{t^\varkappa\Delta_-\Delta_+}\\
\end{pmatrix}
\begin{pmatrix}
a_{11}^+(t)&0\\
0&\frac{\Delta_+(t)}{a_{11}^+(t)}
\end{pmatrix},
\end{equation}
where
\begin{equation}
\label{coefficients} 
\alpha (t) = \frac{a_{11}^{-}(t)a_{21}(t)}
{\Delta_{-}(t)t^{\varkappa}a_{11}^+(t)}, \ \ \beta (t) =
\frac{a_{11}^{+}(t)a_{12}(t)} {\Delta_{+}(t)t^{\varkappa}a_{11}^{-}(t)}.
\end{equation}

Introducing functions $\alpha_{\pm}(t) = \mathcal{P}_{\pm}\alpha (t)$,
$\beta_{\pm}(t)=\mathcal{P}_{\pm}\beta(t)$,
matrix function $A(t)$ can be further transformed to the form:

\begin{equation}
\label{final}
 A(t)=
 t^{\varkappa}\begin{pmatrix}
a_{11}^{-}(t) & 0 \\
 & \\
\frac{\Delta_{-}(t)\alpha_{-}(t)}{a_{11}^{-}(t)} &
\frac{\Delta_{-}(t)}{a_{11}^{-}(t)} 
\end{pmatrix}
a(t)
\begin{pmatrix}
a_{11}^{+}(t) & \frac{\Delta_{+}(t)\beta_{+}(t)}{a_{11}^{+}(t)} \\
 & \\
0 & \frac{\Delta_{+}(t)}{a_{11}^{+}(t)} 
\end{pmatrix},
\end{equation}
where
\begin{equation}\label{matrix_a}
a(t) =
\begin{pmatrix}
1&0\\
-\alpha_-(t)&1
\end{pmatrix}
\begin{pmatrix}
1&\beta(t)\\
\alpha(t)&\frac{a_{22}a_{11}^-a_{11}^+}{t^\varkappa\Delta_-\Delta_+}\\
\end{pmatrix}
 \begin{pmatrix}
1&-\beta_+(t)\\
0&1
\end{pmatrix}.
\end{equation}
It remains to observe that this matrix function has the form \eqref{aform}:
\begin{equation}
a(t) = \begin{pmatrix}\label{aform_doc}
1 & \beta_{-}(t) \\
\alpha_{+}(t) & t^{\theta}+\alpha_{+}(t) \beta_{-}(t)
\end{pmatrix}.
\end{equation}

For 
$\theta =0$ this reduction was carried out in \cite{Adukov95} where an explicit formulas for the partial indices of $a(t)$ were obtained under additional condition that $\alpha_+(t)$ is
a polynomial in $t$ or $\beta_-(t)$ is
a polynomial in $t^{-1}$.


Note that the matrix function $a(t)$ is invertible ($\det a(t)=t^\theta$) and thus can be factorised: 
\begin{equation}
\label{a_fact}
a(t)=a_-(t)d_r(t)a_+(t),\quad  d_r(t)={\rm diag}[t^{\rho_1}, t^{\rho_2}],
\end{equation}
where $\rho_1\le \rho_2$ and $\theta=\rho_1+\rho_2$.

Comparing the latter with \eqref{a_fact} and \eqref{final} we conclude that the partial indices of the matrices $A(t)$ and $a(t)$ are related in the following manner: 
\begin{equation}
\label{indices_both}
\varkappa_1=\varkappa+\rho_1,\quad \varkappa_2=\varkappa+\rho_2.
\end{equation}

In the following, we will focus on the stable factorisation of the matrix function $a(t)$. Hence, 
\begin{equation}
\label{stable}
\begin{matrix}
\rho_1=\rho_2= \nu \quad {\rm for \,\, even}\quad \theta=2\nu, \\[2mm]
\rho_1=\nu , \rho_2= \nu+1\quad {\rm  for \,\, odd}\quad  \theta=2\nu+1.
\end{matrix}
\end{equation}

As we have already mentioned above, when the authors of \cite{ES} built  their approximation, they used a completely different representation of the matrix function $A(t)$, in comparison to \eqref{final}:


\begin{equation*}
 S(t)=
 \begin{pmatrix}
s_{11}^{+}(t) & 0 \\
 & \\
\frac{t^{-k_1} s_{21}(t)}{s_{11}^{-}(t)} &
\frac{\Delta^{+}(t)}{s_{11}^{+}(t)} 
\end{pmatrix}
 \begin{pmatrix}
 t^{k_1} & 0 \\
 0 & t^{k-k_1}
 \end{pmatrix}
\begin{pmatrix}
s_{11}^{-}(t) & \frac{t^{-k_1}s_{12}(t)}{s_{11}^{+}(t)} \\
 & \\
0 & \frac{\Delta^{-}(t)}{s_{11}^{-}(t)} 
\end{pmatrix},
\end{equation*}
where $s_{11}(t)=s_{11}^{+}(t)t^{k_1}s_{11}^{-}$ and $\det{S(t)}=\Delta^{+}(t)t^{k}\Delta^{-}(t)$.

\section{Approximation of the matrix function $a(t)$ by the Laurent matrix polynomial $a_N(t)$}\label{appro}

In this section we approximate $a(t)$ by the Laurent matrix polynomial $a_N(t)$ and estimate the norm of the difference $\|a-a_N\|_W$.

Let $\alpha_+(t)=\sum_{k=0}^\infty \alpha_k t^k$ and
$\beta_-(t)=\sum_{k=-\infty}^{-1}\beta_kt^k$.  We denote
$$\alpha_+^{(N)}(t)=\sum_{k=0}^N\alpha_kt^k, \ \
\beta_-^{(N)}(t)=\sum_{k=-N}^{-1}\beta_kt^k
$$ 
and consider the Laurent
matrix polynomial defined in \eqref{a_N}.

Similarly to the matrix function $a(t)$, since $\det a_N(t)= t^{\theta}$, then $a_N(t)$ is an invertible
element of $W^{2\times 2}$ for any $N$.

Now we can estimate the norm $\|a-a_N\|_W$ as
\begin{multline*}
\|a-a_N\|_W\leq
\|\alpha_+-\alpha_+^{(N)}\|_W+\|\beta_{-}-\beta_-^{(N)}\|_W+ \|\alpha_+\beta_{-}-\alpha_+^{(N)}\beta_-^{(N)}\|_W.
\end{multline*}
Representing
$$\alpha_+(t)\beta_-(t)-\alpha_+^{(N)}(t)\beta_-^{(N)}=\alpha_+(t)\Bigl(\beta_-(t)-\beta_-^{(N)}(t)\Bigr)+
\beta_-^{(N)}(t)\Bigl(\alpha_+(t)-\alpha_+^{(N)}(t)\Bigr)
$$
and taking into account that $\|\beta_-^{(N)}\|_W\leq
\|\beta_{-}\|_W$, we get
\begin{multline}
\label{estaaN1}
\|a-a_{N}\|_W\leq\\
\Bigl(1+\|\alpha_+\|_W\Bigr)\|\beta_{-}-\beta_-^{(N)}\|_W+\Bigl(1+\|\beta_-\|_W\Bigr)
\|\alpha_{+}-\alpha_+^{(N)}\|_W.
\end{multline}
The last inequality gives us a qualified estimate of the norm of the difference between the matrix functions  $a_N(t)$ and $a(t)$ defined in \eqref{aform} and \eqref{a_N}, respectively.  

To evaluate explicitly the convergence rate, we make additional assumptions on the rate of decay of the Fourier coefficients. 
Namely, we restrict ourselves to the case when the matrix function $a(t)$ is analytic 
in the annulus $r_1<|t|<r_2$ containing the unit circle, $\mathbb T$, while $r_1$ and $r_2$ can be zero and infinity, respectively.
This effectively means that the function $\alpha_+(t)$ is analytic in the
domain $|t|<r_2$ while $\beta_-(t)$ is analytic in the domain 
$|t|>r_1$.

Obviously, the restriction of $a(t)$ on the unit circle
$|t|=1$ belongs to the Wiener algebra $W^{2\times 2}$ and
its matrix Fourier coefficients $a_k$ coincides with the coefficients 
of the Laurent series for $a(t)$ in the annulus $r_1<|t|<r_2$.

The
Laurent coefficients $|\alpha_k|$ of the function $\alpha_+(t)$ hold  
Cauchy's inequalities \cite{Ah}:
\begin{equation}
\label{alpha_est}
|\alpha_k|\leq\frac{M_+(\zeta)}{\zeta^k}
\end{equation}
for any $\zeta$, $0<\zeta<r_2$. Here $M_+(\zeta)=\max_{|t|=\zeta}|\alpha_+(t)|$. 

Fixing a value $\zeta=\zeta_2$, $(1<\zeta_2<r_2)$, 
we can estimate the sums of series $\sum_{k=0}^\infty |\alpha_k|$,
$\sum_{k=N+1}^\infty |\alpha_k|$ and obtain the following
inequalities:

\begin{equation}
\label{alpha_norm}
\|\alpha_+\|_W\leq\frac{\zeta_2M_+(\zeta_2)}{\zeta_2-1}, \ \  \|\alpha_{+}-\alpha_+^{(N)}\|_W
\leq\frac{M_+(\zeta_2)}{\zeta_2^N(\zeta_2-1)}, M_+(\zeta_2)=\max_{|t|\leq \zeta_2}|\alpha_+(t)|.
\end{equation}

If $r_2<\infty$, and $\alpha_+(t)$ is continuously extended on  
$|t|=r_2$, then the Cauchy inequalities hold for $\zeta=r_2$ and, in this case, we can take also $\zeta_2=r_2$.

For the function $\beta_-(t)$,
we can obtain similar estimates fixing a value $\zeta=\zeta_1$, 
($r_1<\zeta_1<1$):
\begin{equation}
\label{beta_norm}
\|\beta_-\|_W\leq\frac{\zeta_1M_-(\zeta_1)}{1-\zeta_1}, \ \  \|\beta_{-}-\beta_-^{(N)}\|_W
\leq\frac{\zeta_1^{N+1}M_-(\zeta_1)}{(1-\zeta_1)},
\end{equation}
where $M_-(\zeta_1)=\max_{|t|=\zeta_1}|\beta_-(t)|=
\max_{|t|\geq\zeta_1}|\beta_-(t)|$. If $r_1>0$ and $\beta_-(t)$ admits a continuous extension on 
the circle $|t|=r_1$, the value $\zeta_1=r_1$ is also admissible. 

This allows us to obtain an explicit estimate (\ref{estaaN1}) 
\begin{equation}
\label{estaaN2}
\|a-a_{N}\|_W\leq\delta_N(\zeta_1,\zeta_2),
\end{equation}
where $r_1<\zeta_1<1$, $1<\zeta_2<r_2$ and
\begin{equation}
\label{delta_N12}
\delta_N(\zeta_1,\zeta_2)=\Bigl(1+\frac{\zeta_2M_+(\zeta_2)}{\zeta_2-1}\Bigr)
\frac{\zeta_1^{N+1}M_-(\zeta_1)}{1-\zeta_1}+\Bigl(1+\frac{\zeta_1M_-(\zeta_1)}{1-\zeta_1}\Bigr)\frac{M_+(\zeta_2)}{\zeta_2^N(\zeta_2-1)}.
\end{equation}
The values $\zeta_1=r_1$ and $\zeta_2=r_2$
can be also admissible. 

It is obvious that $\delta_N(\zeta_1,\zeta_2)$
is monotonic decreasing as $N\to \infty$ 
and tends to zero for any fixed $\zeta_1, \zeta_2$.
Although,  if $\zeta_1$, $\zeta_2$ are close enough to $1$, then the convergence is slow.  
On the other hand, the function increase when $\zeta_1$ or $\zeta_2$ are close to the other ends of their intervals.
Therefore, it is desirable to optimize the choice $\zeta_1$, $\zeta_2$ minimizing the function
$\delta_N(\zeta_1,\zeta_2)$ on the rectangle 
$(r_1,1)\times (1, r_2)$. Depending on whether the series converge on the closed or open domain, we can distinguish four respective cases.  

\begin{enumerate}
\item $r_1>0, r_2 <\infty$, that is, the function $\alpha_+(t)$ is analytic into $|t|< r_2$, 
and $\beta_-(t)$ is analytic into $|t|> r_1$.
\item $r_1>0, r_2 =\infty$, that is, the function $\alpha_+(t)$
is an entire function, 
and $\beta_-(t)$ is analytic into $|t|> r_1$.
\item $r_1=0, r_2 <\infty$, that is, the function $\alpha_+(t)$ is analytic into $|t|< r_2$, 
and $\beta_-(t)$ is analytic into $\overline{\mathbb C}\setminus \{0\}$.
\item $r_1=0, r_2 =\infty$, that is, the function $\alpha_+(t)$
is an entire function, 
and $\beta_-(t)$ is analytic into $\overline{\mathbb C}\setminus \{0\}$.
\end{enumerate} 
In order to effectively use \eqref{delta_N12},  we need to estimate $M_-(\zeta_1)$, 
$M_+(\zeta_2)$ in the respective domains with the best possible accuracy.
Here a difficulty steams from the fact that the minimization is sought in the open set $(r_1,1)\times (1, r_2)$.
In practice we restrict ourselves to a closed rectangular $K$ embedded into $(r_1,1)\times (1, r_2)$ that provides quite a reasonable approximation.

\section{A criterion of the stable factorisation for the matrix function $a(t)$}\label{cri}

Having constructed a Laurent matrix polynomial (\ref{a_N}) of degree $N$, that is an approximant of the matrix function (\ref{aform}) allowing for a stable factorisation

as is described in the section \ref{appro} above, we can compute the following measure:
\begin{equation}
\label{q_n}
q_N(\zeta_1,\zeta_2)=\sigma\,\delta_N(\zeta_1,\zeta_2)\|\bigl(a_+^{(N)}\bigr)^{-1}\|_W\,\|\bigl(a_-^{(N)}\bigr)^{-1}\|_W,
\end{equation}
where
\begin{equation}
\label{sigma}
\sigma=\|d_r\|_W=\|d_r^{-1}\|_W=
\begin{cases}
1,&\text{if $\theta=2\nu$},\\
2,&\text{if $\theta=2\nu+1$.}
\end{cases}
\end{equation}

\begin{theorem}\label{basethm} Let $a(t)$ admits a stable factorisation \eqref{a_fact}
and the factorisation is $P$-normalised. Then, there exists a natural $N_0$ such that, for $N>N_0$
and for all admissible pairs $(\zeta_1, \zeta_2)$, the following conditions are fulfilled
\begin{enumerate}
\item $a_N(t)$ admits a stable $P$-normalised factorisation: 
\begin{equation}
\label{fact_a_N}
a_N(t)=a_-^{(N)}(t)d_r(t)a_+^{(N)}(t),
\end{equation}
\item $q_N(\zeta_1,\zeta_2)<1$.
\end{enumerate}
\end{theorem}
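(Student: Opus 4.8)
The plan is to establish both conclusions from the stability of the canonical/stable factorisation together with the convergence $\|a-a_N\|_W \to 0$ proved in Section~\ref{appro}, using the Toeplitz-operator machinery recalled in Subsection~\ref{main_def}.

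\textbf{Step 1: stability of the partial indices.} First I would pass from the matrix function $a(t)$ to its associated Toeplitz operator $T_a$ on $W_+^{2\times 2}$. Since $a(t)$ admits the stable factorisation $a(t)=a_-(t)d_r(t)a_+(t)$ with $d_r(t)={\rm diag}[t^{\rho_1},t^{\rho_2}]$ and $\rho_2-\rho_1\le 1$, the results quoted after \eqref{norm} give that $T_a$ (or a suitable shift $T_{t^{-\nu}a}$) is one-side invertible with explicit one-side inverse $T_{a_+^{-1}}T_{d_r^{-1}}T_{a_-^{-1}}$; the norm of this one-side inverse is bounded by $\sigma\,\|a_+^{-1}\|_W\,\|a_-^{-1}\|_W$. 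By the estimate $\|a-a_N\|_W\le \delta_N(\zeta_1,\zeta_2)$ from \eqref{estaaN2} and the fact that $\delta_N\to 0$ as $N\to\infty$ (monotonically, for each fixed admissible pair), there is $N_0$ so that for $N>N_0$ the perturbation $\|T_a-T_{a_N}\|\le\|a-a_N\|_W$ falls below $1/\|T_a^{(-1)}\|$, hence by \cite[ch.II, Th. 5.4]{GK73} the operator $T_{a_N}$ is one-side invertible from the same side as $T_a$, and not invertible if $T_a$ is not invertible. Since $\det a_N(t)=t^\theta=\det a(t)$, the sum of the partial indices of $a_N$ equals $\theta=\rho_1+\rho_2$; combining this with the one-side invertibility constraints on $T_{a_N}$ forces the partial indices of $a_N$ to be exactly $(\rho_1,\rho_2)$, i.e. the stable pair in \eqref{stable}. (In the even case $\theta=2\nu$ one works with $t^{-\nu}a$ so that the index sum is $0$ and one-side invertibility from both sides pins down both indices as $\nu$; in the odd case the index sum is $1$, and one-side invertibility-but-not-invertibility gives $(\nu,\nu+1)$.) Because a single $N_0$ can be chosen uniformly (take $\delta_N$ small enough, which is possible for all admissible $(\zeta_1,\zeta_2)$ simultaneously once $N$ is large, using monotone convergence on the compact rectangle $K$), conclusion~1 holds for all admissible pairs at once.

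\textbf{Step 2: the factorisation is $P$-normalised with the same $P$.} Having fixed the partial indices of $a_N$ to be the same stable pair as those of $a$, I invoke the stability of the $P$-normalisation recorded at the end of Subsection~2.2 \cite[Th. 4.1]{Adukov22}: since $a_N(t)$ has the same right partial indices as $a(t)$ and $\|a-a_N\|_W<\delta$ for a sufficiently small $\delta$ (again achievable by enlarging $N_0$), $a_N(t)$ admits the $P$-normalised factorisation with the \emph{same} permutation matrix $P$ as $a(t)$. This yields \eqref{fact_a_N} with the normalised factors $a_\pm^{(N)}$, and in particular guarantees the factors are well-defined so that the norms $\|(a_\pm^{(N)})^{-1}\|_W$ appearing in $q_N$ make sense.

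\textbf{Step 3: the smallness $q_N<1$.} Finally I would show $q_N(\zeta_1,\zeta_2)\to 0$ as $N\to\infty$. The quantity $\sigma$ is constant. For the norms of the inverse factors, the key point is convergence of the normalised factors: as $N\to\infty$, $a_\pm^{(N)}\to a_\pm$ in $W^{2\times 2}$ and likewise their inverses converge (this is exactly the content of the stability-of-factorisation results and is elaborated in Section~\ref{canon} of the paper), so $\|(a_+^{(N)})^{-1}\|_W\,\|(a_-^{(N)})^{-1}\|_W$ is bounded — say by $2\|a_+^{-1}\|_W\|a_-^{-1}\|_W$ for $N$ large. Since $\delta_N(\zeta_1,\zeta_2)\to 0$ for each fixed admissible pair (and uniformly over the compact rectangle $K$ of admissible pairs, by the explicit form \eqref{delta_N12} and monotone convergence), we get $q_N(\zeta_1,\zeta_2)\le \sigma\,\delta_N(\zeta_1,\zeta_2)\cdot 2\|a_+^{-1}\|_W\|a_-^{-1}\|_W \to 0$, so a further enlargement of $N_0$ secures $q_N<1$ for all $N>N_0$ and all admissible $(\zeta_1,\zeta_2)$, giving conclusion~2.

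\textbf{Main obstacle.} The delicate point is Step~1's passage from one-side invertibility of $T_{a_N}$ to pinning down the \emph{exact} stable pair of partial indices — i.e. ruling out that a perturbation could split the indices into a non-stable pair with the same sum. This is where the hypothesis $\rho_2-\rho_1\le 1$ is essential: one-side invertibility is preserved under the perturbation, the index sum is fixed by $\det a_N=t^\theta$, and for a stable pair these two facts leave only the stable pair as a possibility (a non-stable pair with the same sum would violate the one-side invertibility, or invertibility-failure, transferred from $T_a$). Carefully verifying this case analysis — separately for even and odd $\theta$, and with the correct shift $t^{-\nu}$ applied so that the Toeplitz operator has the right invertibility type — together with checking that $N_0$ may be chosen uniformly in $(\zeta_1,\zeta_2)$, is the technical heart of the proof; the rest is bookkeeping with the estimates from Sections~\ref{appro} and~\ref{canon}.
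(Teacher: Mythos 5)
Your proposal is correct and follows essentially the same route as the paper: one-sided invertibility of $T_a$ with one-sided inverse $T_{a_+^{-1}}T_{d_r^{-1}}T_{a_-^{-1}}$ bounded by $\sigma\|a_+^{-1}\|_W\|a_-^{-1}\|_W$, the perturbation bound \eqref{norm} transferring one-sided invertibility to $T_{a_N}$, the shift by $t^{-\nu}$ with the determinant fixing the index sum to pin down the stable pair in the even and odd cases, and then convergence of the $P$-normalised factors (via the stability results of \cite{Adukov22}) to bound $\|(a_\pm^{(N)})^{-1}\|_W$ and force $q_N<1$ as $\delta_N\to0$. The only caveat, shared with the paper's own argument, is that $q_N<1$ is really obtained for each fixed (or compactly restricted) admissible pair $(\zeta_1,\zeta_2)$ rather than uniformly over the whole open rectangle.
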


\begin{proof} 
Since the sequence
$a_N(t)$ converges to the matrix function $a(t)$ admitting a stable factorisation, it is always possible to find a natural $N_0$ such that, for all $N>N_0$, $a_N(t)$ admits the same factorisation (the same set of partial indices) \cite{GK58}. However, to deliver an estimate for the stability domain, more accurate analysis is required. This is the aim of this theorem.

Since $a_N(t)$ converges to the matrix function $a(t)$ 
there exists an integer $N_0$ such that 
\begin{equation}\label{vici}
\|a(t)-a_{N}\|_W <\frac{1}{\sigma\|a_+^{-1}\|_W\|a_-^{-1}\|_W}, \ \ \text{if} \ \ N\geq N_{0}.
\end{equation}
From (\ref{vici}) and \eqref{sigma}, it follows that
$$
\|a-a_{N}\|_W < \frac{1}{\|a^{-1}\|_W}, \ \ \text{if} \ \ N\geq N_{0}.
$$
The latter means that $a_N(t)$ is an invertible element of $W^{2 \times 2}$ and thus, it admits a factorisation $a_N(t)=a_-^{(N)}(t)d_r^{(N)}(t)a_+^{(N)}(t)$. 
The next step in the proof is to demonstrate that the latter inequality guarantees also that $a_N(t)$ belongs to the stability region (or equivalently, $d_r^{(N)}(t)=d_r(t))$.

By the assumption of the theorem, the matrix function $a(t)$ admits the stable factorisation $a(t)=a_-(t)d_r(t)a_+(t)$, that is 
$d_r(t)=t^\nu I_2$ if $\theta=2\nu$, and 
$d_r(t)=t^\nu \begin{pmatrix}
1&0\\
0&t
\end{pmatrix}
$ for $\theta=2\nu+1$. Moreover, since we consider the right partial indices of the matrix function $a(t)$, they are  both non-negative for $\nu \geq 0$ and non-positive for $\nu < 0$. We will demonstrate now that $d_r^{(N)}(t)=d_r(t)$.

Let us consider Toeplitz operators  $T_a$ and $T_{a_{N}}$ with the symbols $a(t)$ and
$a_{N}(t)$, respectively. Since the partial indices have the same sign, then the operator $T_a$ is one-sided invertible 
(invertible if $\nu=0$ ) and 
$T^{(-1)}_a=T_{a_+^{-1}}T_{d_r^{-1}}
T_{a_-^{-1}}$ is its one-sided inverse (inverse in the case $\nu=0$), moreover (see \cite{GF71})
$$\|T^{(-1)}_a\|\leq 
\|T_{a_+^{-1}}\|\|T_{d_r^{-1}}\|\|T_{a_-^{-1}}\|
\leq \sigma{\|a_+^{-1}\|_W\|a_-^{-1}\|_W}.
$$
Then, from (\ref{vici}), we have
$$
\|T_a-T_{a_{N}}\| \leq \|a-a_{N}\|_W<
\frac{1}{\sigma\|a_+^{-1}\|_W\|a_-^{-1}\|_W}\leq\frac{1}{\|T^{(-1)}_a\|}.
$$
This in turn means that the operator $T_{a_{N}}$ is also one-side invertible (or invertible if $\nu =0$). Moreover, the operators $T_{a}$ and $T_{a_{N}}$ are invertible from the same side and, therefore, in the factorisation $a_{{N}}(t)=a_-^{(N)}(t)d_r^{(N)}(t)a_+^{(N)}(t)$, $d_r^{(N)}(t)=
{\rm diag}[t^{{\widetilde\rho_1}}, t^{{\widetilde\rho_2}}]$, indices 
$\widetilde\rho_1, \widetilde\rho_2$ have the same signs as 
$\rho_1, \rho_2$.

The following three cases are possible: \\
1) If $a(t)$ admits the canonical factorisation, i.e. $\nu=0$, then $T_{a_{N}}$ is invertible and, hence, $a_{N}(t)$ admits also the canonical factorisation.
\\
2) Let $a(t)$ admits a stable factorisation and $\theta=2\nu$, that is 
$d_r(t)=t^\nu I_2$. Consider auxiliary matrix functions $b(t)=t^{-\nu}a(t)$
and $c(t)=t^{-\nu}a_N(t)$. By the construction, $b(t)$ admits a canonical factorisation,
$b(t)=a_-(t)a_+(t)$. On the other hand, since $\|t^{\nu}\|_W=\|t^{-\nu}\|_W=1$ and, due to inequality~(\ref{vici}), we conclude
\[
\|b-c\|_W<\frac{1}{\|b_+^{-1}\|_W\|b_-^{-1}\|_W}.
\]
Hence, (see the previous case), $c(t)$ also admits a canonical factorisation, $a_N(t)=t^{\nu}c(t)$ and $d_r^{(N)}(t)=d_r(t)$.
\\
3) Finally, let $a(t)$ admits a stable factorisation and $\theta=2\nu+1$, i.e.
$d_r(t)=t^\nu \begin{pmatrix}
1&0\\
0&t
\end{pmatrix}
$. 
Consider again the matrix functions $b(t)=t^{-\nu}a(t)$
and $c(t)=t^{-\nu}a_N(t)$. The auxiliary matrix function  
$b(t)=a_-(t)\begin{pmatrix}
1&0\\
0&t
\end{pmatrix} a_+(t)$ admits a stable factorisation, and 
\[
\|b-c\|_W<\frac{1}{2\|b_+^{-1}\|_W\|b_-^{-1}\|_W}.
\]
This proves that the respective operators are invertible from the same side, and their right indices are non negative. Thus, we conclude that
$\widetilde\rho_1\geq \nu$ and $\widetilde\rho_2> \nu$, and, since
$\widetilde\rho_1+\widetilde\rho_2=2\nu+1$, this is possible only if
$\widetilde\rho_1=\nu, \widetilde\rho_2=\nu+1$. This finishes the proof.

Summarising,  we have shown that, under condition \eqref{vici} the approximate matrix function $a_{N}(t)$ admits a stable factorisation with the same set of stable partial indices as the original matrix function $a(t)$. 

It remains to prove the second statement of the Theorem. Let $N\geq N_0$ and 
$a_N(t)=a_-^{(N)}(t)d_r(t)a_+^{(N)}(t)$ is now the stable $P$-normalised factorisation of $a_N(t)$. 
It follows from Th. 5.1 of \cite{Adukov22} that the factors  
$a_\pm^{(N)}(t)$ converge to normalised $a_\pm(t)$ as $N\to\infty$. Hence, the sequence
$\|\bigl(a_+^{(N)}\bigr)^{-1}\|_W\cdot
\|\bigl(a_-^{(N)}\bigr)^{-1}\|_W$ is bounded. 
As we noted in the previous section, the sequence $\delta_N(\zeta_1,\zeta_2)$ monotonically tends to zero for any fixed pair $(\zeta_1,\zeta_2)$. Hence, starting from some $N$ the inequality $q_N(\zeta_1,\zeta_2)<1$ is fulfilled.
\end{proof}

\begin{theorem}\label{basethm1}
If conditions 1)--2) are fulfilled for at least one $N$ and one admissible pair $(\zeta_1, \zeta_2)$, then $a(t)$ admits the stable factorisation with the same set of partial indices as $a_N(t)$.
\end{theorem}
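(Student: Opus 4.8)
The statement is essentially a converse to Theorem \ref{basethm}: instead of showing that the stability conditions eventually hold, we now want to conclude that their verification for a \emph{single} $N$ already transfers the stable factorisation from $a_N(t)$ back to $a(t)$. The plan is to run the perturbation argument of Theorem \ref{basethm} in the opposite direction. First I would record what condition 1) gives us: $a_N(t)$ admits the stable $P$-normalised factorisation $a_N(t)=a_-^{(N)}(t)d_r(t)a_+^{(N)}(t)$, so in particular $a_N(t)\in GW^{2\times2}$ and the Toeplitz operator $T_{a_N}$ is one-sided invertible (invertible when $\theta=0$), with one-sided inverse $T_{(a_+^{(N)})^{-1}}T_{d_r^{-1}}T_{(a_-^{(N)})^{-1}}$. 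Using the partial multiplicativity of $A\mapsto T_A$ and submultiplicativity of the operator norm, this yields
\[
\|T_{a_N}^{(-1)}\|\le \sigma\,\|(a_+^{(N)})^{-1}\|_W\,\|(a_-^{(N)})^{-1}\|_W .
\]

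Next I would exploit condition 2). By \eqref{estaaN2}, $\|a-a_N\|_W\le\delta_N(\zeta_1,\zeta_2)$, and $q_N(\zeta_1,\zeta_2)<1$ rearranges exactly to
\[
\delta_N(\zeta_1,\zeta_2)<\frac{1}{\sigma\,\|(a_+^{(N)})^{-1}\|_W\,\|(a_-^{(N)})^{-1}\|_W}\le \frac{1}{\|T_{a_N}^{(-1)}\|}.
\]
Hence $\|T_a-T_{a_N}\|\le\|a-a_N\|_W<1/\|T_{a_N}^{(-1)}\|$, and by the stability result for one-sided invertible operators quoted in Section \ref{main_def} (\cite[ch.II, Th. 5.4]{GK73}), $T_a$ is one-sided invertible from the same side as $T_{a_N}$, and non-invertible precisely when $T_{a_N}$ is. Since $a(t)\in GW^{2\times2}$ (recall $\det a(t)=t^\theta$), $T_a$ has well-defined right partial indices $\rho_1\le\rho_2$ with $\rho_1+\rho_2=\theta$, and the sign information on one-sided invertibility forces $\rho_1,\rho_2$ to have the same sign as $\widetilde\rho_1,\widetilde\rho_2$ (the indices of $a_N$).

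To pin the indices down exactly — not merely their signs — I would repeat the $b$–$c$ reduction used in cases 2) and 3) of the proof of Theorem \ref{basethm}, now with the roles reversed. Writing $b(t)=t^{-\nu}a(t)$ and $c(t)=t^{-\nu}a_N(t)$, multiplication by $t^{-\nu}$ is an isometry of $W^{2\times2}$, so the same inequality $\|b-c\|_W<1/\|T_c^{(-1)}\|$ persists; the factorisation of $c(t)$ has the indices $0,0$ (if $\theta=2\nu$) or $0,1$ (if $\theta=2\nu+1$), and in the latter case the non-negativity of the right indices of $T_b$ together with $\rho_1+\rho_2=\theta$ leaves only $\rho_1=\nu,\rho_2=\nu+1$; in the former case one-sided invertibility from both sides gives $\rho_1=\rho_2=\nu$. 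Either way $d_r$ for $a(t)$ equals $d_r$ for $a_N(t)$. The main obstacle is the same as in Theorem \ref{basethm}: the operator-norm inequality only controls the \emph{sign} of the partial indices, so the genuine content is the auxiliary shift by $t^{-\nu}$ that collapses the stable index pattern to $\{0,0\}$ or $\{0,1\}$, where the arithmetic constraint $\rho_1+\rho_2=\theta$ becomes decisive. I would also remark that, having established that $a(t)$ and $a_N(t)$ share the same stable partial indices, the $P$-normalisation of $a(t)$ is automatically of the same type as that of $a_N(t)$ by \cite[Th. 4.1]{Adukov22}, since $\|a-a_N\|_W$ is small.
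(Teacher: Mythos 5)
Your proposal is correct and follows essentially the same route as the paper: it bounds $\|T_{a_N}^{(-1)}\|$ by $\sigma\|(a_+^{(N)})^{-1}\|_W\|(a_-^{(N)})^{-1}\|_W$, uses $q_N<1$ together with $\|a-a_N\|_W\le\delta_N$ to invoke the one-sided-invertibility perturbation theorem for $T_a$, and then pins down the indices by the same $t^{-\nu}$ reduction and the constraint $\rho_1+\rho_2=\theta$ that the paper's proof refers to as "the same arguments as above" from Theorem \ref{basethm}. Your write-up merely makes explicit the steps the paper leaves implicit (plus an optional remark on normalisation type), so no substantive difference.
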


\begin{proof}
Since $\det a_N(t)=t^\theta$, $a_N(t)$ is invertible in the algebra $W^{2\times 2}$ for any $N$. Suppose that there exists $N$ such that $a_N(t)$ admits a stable $P$-normalised factorisation 
$
a_N(t)=a_-^{(N)}(t)d_r(t)a_+^{(N)}(t),
$
and for some 
$(\zeta_1,\zeta_2)$ it is fulfilled the inequality
\begin{equation}
\label{second_ineq}
\delta_N(\zeta_1,\zeta_2)<\frac{1}{\sigma\|\bigl(a_+^{(N)}\bigr)^{-1}\|_W\|\bigl(a_-^{(N)}\bigr)^{-1}\|_W},
\end{equation}
that coincides with $q_N<1$ (compare \eqref{q_n})
\begin{equation}\label{baseinq}
\|a-a_N\|_W\leq\delta_N(\zeta_1,\zeta_2)<\frac{1}{\sigma\|\bigl(a_+^{(N)}\bigr)^{-1}\|_W\,\|\bigl(a_-^{(N)}\bigr)^{-1}\|_W}.
\end{equation}
Now  we carry out the same reasoning as in the proof of the Theorem 4.1, only interchanging
$a(t)$ by $a_N(t)$. Thus, in virtue of (\ref{baseinq})
we have 
\begin{equation}\label{iqnnorm}
\|T_a-T_{a_N}\|\leq \|a-a_N\|_W<\frac{1}{\|\bigl(a_+^{(N)}\bigr)^{-1}\|_W\|d_r(t)\|_W\|\bigl(a_-^{(N)}\bigr)^{-1}\|_W}. 
\end{equation}
Since $a_N(t)$ admits the stable factorisation, the operator $T_{a_N}$ is one-side invertible, and
$T_{a_N}^{(-1)}=T_{(a_+^{(N)})^{-1}}T_{d_r^{-1}}T_{(a_-^{(N)})^{-1}}$ is its one-side inverse.

Now from (\ref{iqnnorm}) it follows that
$
\|T_a-T_{a_N}\|<\frac{1}{\|T_{a_N}^{(-1)}\|}.
$
This, in turn, means that the operator $T_a$ is one-side invertible. The same arguments as above show that the matrix function $a(t)$ admits the stable factorisation.
\end{proof}

\begin{remark}\label{anyfact}
The normalisation of the factorisation is used only in the proof of
necessity of conditions 1)-2). In fact, in the Theorem \ref{basethm} we can omit the $P$-normalisation requirement and require that the sequence $\|\bigl(a_+^{(N)}\bigr)^{-1}\|_W$ $\|\bigl(a_-^{(N)}\bigr)^{-1}\|_W$ is bounded from above. In the Theorem \ref{basethm1} we can use any factorisation. This allows us to avoid the computational difficulties associated with the procedure of $P$-normalisation. However, 
$P$-normalisation is a mandatory step in constructing an approximate factorisation with the guaranteed accuracy. 
\end{remark}
\begin{remark}\label{noncanonic}
Those theorems allow us to formulate a criterion for the case when matrix function $a(t)$ does not admit a stable factorisation. However, this would require checking an infinite number of conditions (for each $N$) and, thus, is unlikely to be implemented in practice.
\end{remark}

In fact, the results provided in the Sections~\ref{appro} and \ref{cri} allow to address the main challenge of this paper, providing two explicit conditions $\| a-a_N\|_W<\delta_N(\zeta_1,\zeta_2)$ and $q_N<1$ that a) can be numerically verified and b) guarantee that matrix function $a(t)$ has the same set of stable partial indices as $a_N(t)$.
To highlight things even further, we note that they represent those inequalities,
$\| a-a_N\|_W<\delta$ and $\delta<\tilde r_{cf}$, discussed around the Figure~\ref{fig_expl}.
Indeed, the inequality  $q_N<1$ can be equivalently written in the form \eqref{second_ineq} where the right-hand side corresponds to the stability radius $\tilde r_{sf}$.
The conditions are sufficient for stability the numerical procedure used in the approximate factorisation while continuity of the factors has been proven in \cite[Th. 5.1]{Adukov22}. However, the quality of the approximation remains unknown  and thus, if the procedure does not converge fast enough, this might be a serious obstacle for computations. 

Having a solid theoretical estimate for the convergence rate is always a benefit. Next section is devoted to this task for a classic canonical case ($\varkappa_1=\varkappa_2=0$) and a generalised canonical one ($\varkappa_1=\varkappa_2$). Unfortunately, the remaining stable case ($\theta= 2\nu+1$) requires more delicate treatment.

\section{Accuracy estimate of the factors in approximation of a canonical factorisation}
\label{canon}

In the first subsection, we briefly recall results of the work \cite{AD} on the continuity of factors $A_\pm(t)$ of the classic canonical factorisation (all partial indices are zero) and improve explicit estimates for the norm $\|A_\pm-A_\pm^{(N)}\|_W$ presented therein. Bearing in mind possible applications, we provide the analysis for matrix functions of an arbitrary order $p\ge2$.

\subsection{Explicit estimates for the factors of canonical factorisation of the original matrix function $A(t)$.}

We will assume that the canonical factorisation $A(t)=A_{-}(t)A_{+}(t)$ is normalised by the condition 
$A_{-}(\infty)=A_{0}$,
where $A_0$ is an arbitrary, predetermined, invertible matrix. This normalisation guarantees the uniqueness of the canonical factorisation. Clearly, we can choose a unit matrix as the limiting value for the factor ($A_0=I_p$), however, since the proof is not much dependent in this case of the normalisation we stay with more general formulation.

We need the following elementary lemmas. Their proofs for $q=1/2$ are given in 
\cite{AD}. Even the proofs do not differ much in the general case, the new results represent an improvement allowing to deliver the estimates in more convenient for practical use. 

\begin{lemma}\label{lemxx}
Let $\mathcal{A}$  be an invertible bounded linear operator in some Banach space and
and the operator $\widetilde{\mathcal{A}}$ satisfies the inequality
\begin{equation}
\label{morestrict}
\| \mathcal{A}-\widetilde{\mathcal{A}} \| \leq \frac{q}{ \| {\mathcal{A}}^{-1} \|}, \quad 0<q<1.
\end{equation} 
Then $\widetilde{\mathcal{A}}$ is also invertible and, for solutions of the equations $\mathcal{A}x=b$, $\widetilde{\mathcal{A}}\widetilde{x}=\widetilde{b}$ the following estimate holds true
$$
\| x-\widetilde{x}\| \leq \frac{\|{\mathcal{A}}^{-1}\|}{1-q}\left(\|b\|\,\|{\mathcal{A}}^{-1}\| \,\| \mathcal{A}-\widetilde{\mathcal{A}}\| +
\| b-\widetilde{b}\|\right).
$$ 
\end{lemma}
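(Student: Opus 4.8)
The plan is to prove the two assertions in turn: first that $\widetilde{\mathcal{A}}$ is invertible, then the quantitative estimate for $\|x-\widetilde x\|$.

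\textbf{Invertibility of $\widetilde{\mathcal{A}}$.}
Since $\|\mathcal{A}-\widetilde{\mathcal{A}}\|\le q/\|\mathcal{A}^{-1}\|<1/\|\mathcal{A}^{-1}\|$, this is precisely the hypothesis of the perturbation result recalled after \eqref{norm} (Banach-algebra version of \cite[ch.II, Th.~5.4]{GK73}), applied with the two-sided invertible operator $\mathcal{A}$; hence $\widetilde{\mathcal{A}}$ is invertible. Alternatively, and more usefully for the bound below, I would write $\widetilde{\mathcal{A}}=\mathcal{A}\bigl(\mathcal{I}-\mathcal{A}^{-1}(\mathcal{A}-\widetilde{\mathcal{A}})\bigr)$, note that $\|\mathcal{A}^{-1}(\mathcal{A}-\widetilde{\mathcal{A}})\|\le q<1$, so the Neumann series converges and $\|\widetilde{\mathcal{A}}^{-1}\|\le\|\mathcal{A}^{-1}\|/(1-q)$. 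This last inequality is the key auxiliary estimate.

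\textbf{The error estimate.}
Starting from $\widetilde{\mathcal{A}}\widetilde x=\widetilde b$ and $\mathcal{A}x=b$, I would write
\[
\widetilde{\mathcal{A}}(x-\widetilde x)=\widetilde{\mathcal{A}}x-\widetilde b=(\widetilde{\mathcal{A}}-\mathcal{A})x+\mathcal{A}x-\widetilde b=(\widetilde{\mathcal{A}}-\mathcal{A})x+(b-\widetilde b),
\]
so that $x-\widetilde x=\widetilde{\mathcal{A}}^{-1}\bigl[(\widetilde{\mathcal{A}}-\mathcal{A})x+(b-\widetilde b)\bigr]$. Taking norms and using $\|\widetilde{\mathcal{A}}^{-1}\|\le\|\mathcal{A}^{-1}\|/(1-q)$ gives
\[
\|x-\widetilde x\|\le\frac{\|\mathcal{A}^{-1}\|}{1-q}\bigl(\|\mathcal{A}-\widetilde{\mathcal{A}}\|\,\|x\|+\|b-\widetilde b\|\bigr),
\]
and finally bounding $\|x\|=\|\mathcal{A}^{-1}b\|\le\|\mathcal{A}^{-1}\|\,\|b\|$ yields exactly the stated inequality.

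There is no real obstacle here; the only point requiring a little care is deciding whether to bound $x-\widetilde x$ via $\widetilde{\mathcal{A}}^{-1}$ (as above) or via $\mathcal{A}^{-1}$. Using $\widetilde{\mathcal{A}}^{-1}$ together with the Neumann bound $\|\widetilde{\mathcal{A}}^{-1}\|\le\|\mathcal{A}^{-1}\|/(1-q)$ is the cleanest route and produces the factor $1/(1-q)$ in the claimed form without any additional manipulation; an alternative derivation splitting $x-\widetilde x$ through $\mathcal{A}^{-1}(b-\widetilde b)$ plus a correction term also works but needs the same Neumann estimate to close, so it is not simpler. I would therefore present the argument as above, remarking only that the case $q=1/2$ is the one recorded in \cite{AD}.
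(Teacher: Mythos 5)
Your proof is correct: the Neumann-series bound $\|\widetilde{\mathcal{A}}^{-1}\|\le\|\mathcal{A}^{-1}\|/(1-q)$, the identity $\widetilde{\mathcal{A}}(x-\widetilde x)=(\widetilde{\mathcal{A}}-\mathcal{A})x+(b-\widetilde b)$, and the bound $\|x\|\le\|\mathcal{A}^{-1}\|\,\|b\|$ yield exactly the stated estimate. The paper does not reproduce a proof of this lemma (it refers to \cite{AD} for the case $q=1/2$ and notes the general case is essentially the same), and your argument is precisely that standard perturbation argument, so it matches the intended route.
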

\begin{lemma}\label{lem2} Let the element $\mathcal{A}$ of a Banach algebra is invertible
and the element $\widetilde{\mathcal{A}}$ satisfies the inequality \eqref{morestrict}. 
Then $\widetilde{\mathcal{A}}$ is invertible element
and
$$
\|{\mathcal{A}}^{-1}-\widetilde{\mathcal{A}}^{-1} \| 
\leq \frac{1}{1-q}\, \|\mathcal{A}^{-1}\|^2\cdot \|{\mathcal{A}}-\widetilde{\mathcal{A}} \|.
$$
\end{lemma}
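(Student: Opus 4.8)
The plan is to prove Lemma \ref{lem2} as a straightforward consequence of Lemma \ref{lemxx}, specialising it to the regular representation of the Banach algebra on itself. First I would note that multiplication by $\mathcal{A}$ defines a bounded linear operator $L_{\mathcal A}$ on the algebra viewed as a Banach space, and that since $\mathcal{A}$ is invertible in the algebra, $L_{\mathcal A}$ is an invertible operator with $L_{\mathcal A}^{-1}=L_{{\mathcal A}^{-1}}$ and $\|L_{\mathcal A}^{-1}\|\le\|{\mathcal A}^{-1}\|$ (in fact in a unital algebra with $\|1\|=1$ one has equality, but the inequality suffices). The hypothesis \eqref{morestrict} then transfers to $\|L_{\mathcal A}-L_{\widetilde{\mathcal A}}\|=\|L_{{\mathcal A}-\widetilde{\mathcal A}}\|\le\|{\mathcal A}-\widetilde{\mathcal A}\|\le q/\|{\mathcal A}^{-1}\|\le q/\|L_{\mathcal A}^{-1}\|$, which is exactly the condition needed to apply Lemma \ref{lemxx} to $L_{\mathcal A}$ and $L_{\widetilde{\mathcal A}}$. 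In particular Lemma \ref{lemxx} already yields that $L_{\widetilde{\mathcal A}}$, hence $\widetilde{\mathcal A}$, is invertible.

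Next I would choose the right-hand sides in the two linear equations of Lemma \ref{lemxx} so that the solutions are the inverses we want to compare. Taking $b=\widetilde b=1$ (the unit of the algebra), the equations ${\mathcal A}x=1$ and $\widetilde{\mathcal A}\widetilde x=1$ have solutions $x={\mathcal A}^{-1}$ and $\widetilde x=\widetilde{\mathcal A}^{-1}$. Substituting into the estimate of Lemma \ref{lemxx}, with $\|b\|=\|1\|=1$ and $\|b-\widetilde b\|=0$, gives
\[
\|{\mathcal A}^{-1}-\widetilde{\mathcal A}^{-1}\|\le\frac{\|{\mathcal A}^{-1}\|}{1-q}\bigl(\|{\mathcal A}^{-1}\|\cdot\|{\mathcal A}-\widetilde{\mathcal A}\|\bigr)=\frac{1}{1-q}\,\|{\mathcal A}^{-1}\|^2\cdot\|{\mathcal A}-\widetilde{\mathcal A}\|,
\]
which is precisely the claimed bound. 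Alternatively, and essentially equivalently, one can argue directly from the identity ${\mathcal A}^{-1}-\widetilde{\mathcal A}^{-1}=\widetilde{\mathcal A}^{-1}(\widetilde{\mathcal A}-{\mathcal A}){\mathcal A}^{-1}$ combined with a Neumann-series estimate $\|\widetilde{\mathcal A}^{-1}\|\le\|{\mathcal A}^{-1}\|/(1-q)$ coming from \eqref{morestrict}; I would likely present whichever is shorter, but deriving it from Lemma \ref{lemxx} keeps the section self-contained and mirrors the structure of \cite{AD}.

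I do not anticipate a genuine obstacle here; the only point requiring a little care is the passage from the algebra to its regular representation, namely verifying that the norm on $L_{\mathcal A}$ induced by the algebra norm behaves submultiplicatively and that $\|1\|=1$ so that $\|L_{\mathcal A}^{-1}\|\le\|{\mathcal A}^{-1}\|$; this is where the standing assumption that the matrix norm is multiplicative and preserves unity (stated in Section \ref{main_def}) is used. Once that identification is in place the result is immediate from Lemma \ref{lemxx}, so the bulk of the write-up is simply specifying $b=\widetilde b=1$ and reading off the inequality.
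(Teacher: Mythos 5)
Your proposal is correct. Note that the paper itself does not prove Lemma \ref{lem2}: it defers to \cite{AD}, where the case $q=1/2$ is treated, and the standard argument there is the one you mention only as an alternative, namely the identity ${\mathcal A}^{-1}-\widetilde{\mathcal A}^{-1}=\widetilde{\mathcal A}^{-1}(\widetilde{\mathcal A}-{\mathcal A}){\mathcal A}^{-1}$ combined with the Neumann-series bound $\|\widetilde{\mathcal A}^{-1}\|\leq\|{\mathcal A}^{-1}\|/(1-q)$, which follows from writing $\widetilde{\mathcal A}={\mathcal A}\bigl(1-{\mathcal A}^{-1}({\mathcal A}-\widetilde{\mathcal A})\bigr)$ and using $\|{\mathcal A}^{-1}({\mathcal A}-\widetilde{\mathcal A})\|\leq q<1$. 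Your primary route, through Lemma \ref{lemxx} applied to the left regular representation with $b=\widetilde b=1$, also works and has the virtue of reusing the operator lemma verbatim; the norm bookkeeping $\|L_{\mathcal A}^{-1}\|\leq\|{\mathcal A}^{-1}\|$ and $\|L_{\mathcal A}-L_{\widetilde{\mathcal A}}\|\leq\|{\mathcal A}-\widetilde{\mathcal A}\|$ is handled in the right direction and yields exactly the claimed constant. The only step you should not leave implicit is the passage from invertibility of the operator $L_{\widetilde{\mathcal A}}$ to invertibility of the element $\widetilde{\mathcal A}$ in the algebra: invertibility of $L_{\widetilde{\mathcal A}}$ a priori only produces a right inverse $\widetilde x=L_{\widetilde{\mathcal A}}^{-1}(1)$. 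This is easily repaired either by the factorisation $\widetilde{\mathcal A}={\mathcal A}\bigl(1-{\mathcal A}^{-1}({\mathcal A}-\widetilde{\mathcal A})\bigr)$, which exhibits $\widetilde{\mathcal A}$ as a product of invertible elements, or by noting that $L_{\widetilde{\mathcal A}}(\widetilde x\,\widetilde{\mathcal A}-1)=(\widetilde{\mathcal A}\widetilde x)\widetilde{\mathcal A}-\widetilde{\mathcal A}=0$ and injectivity of $L_{\widetilde{\mathcal A}}$ then gives $\widetilde x\,\widetilde{\mathcal A}=1$. With that one-line remark added, either of your two routes is a complete proof; the direct Neumann-series version is shorter, while the reduction to Lemma \ref{lemxx} keeps the section self-contained, as you intend.
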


\begin{theorem}\label{invplus}
Let $A(t)\in GW^{p\times p}$ admits the canonical factorisation
$
A(t)=A_{-}(t)A_{+}(t)
$
and this factorisation is normalised by the condition $A_{-}(\infty)=A_{0}$.
If the matrix function $\widetilde{A}(t)\in W^{p\times p}$ satisfies the inequality
\begin{equation}
\label{estim}
{\|A-\widetilde{A}\|}_{W} \le \frac{q}{{\|A_{+}^{-1}\|}_{W} {\|A_{-}^{-1}\|}_{W}}, \quad 0<q<1,
\end{equation}
then $\widetilde{A}(t)$ is an invertible matrix function that admits the canonical factorisation
$
\widetilde{A}(t)=\widetilde{A}_{-}(t) \widetilde{A}_{+}(t).
$
For the unique factorisation normalised by the same condition $\widetilde{A}_{-}(\infty)=A_{0}$, 
the following estimates are valid:
\begin{equation}\label{est1plus}
(i)\quad {\| A_{+}^{-1}-\widetilde{A}_{+}^{-1}\|}_{W}\leq \frac{\|A_{0}\|}{1-q}\,{\|A_{+}^{-1}\|}_{W}^{2}\, {\| A_{-}^{-1}\|}_{W}^{2} {\|A-\widetilde{A}\|}_{W},
\end{equation}
\begin{multline}\label{estAminustildeA}
(ii)\quad {\|A_{-}-\widetilde{A}_{-}\|}_{W} \leq\biggl(\|A_{+}^{-1}\|_{W}+
\frac{\|A_0\|\|A\|_W}{1-q}\|A_{+}^{-1}\|_{W}^{2} {\|A_{-}^{-1}\|}_{W}^{2}+\\
+\frac{q}{1-q}\|A_0\|\|A_{+}^{-1}\|_{W} {\|A_{-}^{-1}\|}_{W}\biggr)\|A-\widetilde{A}\|{_{W},}
\end{multline}
Moreover, if condition \eqref{estim} is replaced by a stronger one:
\begin{equation}\label{newradius}
{\|A-\widetilde{A}\|}_{W} \le \frac{q}{\|A_{+}^{-1}\|_{W} {\|A_{-}^{-1}\|}_{W}}\cdot\frac{(1-q)}{{\|A_0\|\|A_{-}^{-1}\|_{W}}\|A_+\|_W\|A^{-1}_+\|_W},
\end{equation}
then
\begin{equation}\label{AtAplus}
(iii)\quad
{\|A_{+}-\widetilde{A}_{+}\|}_{W} \leq \frac{\|A_{0}\|}{(1-q)^2}
 {\|A_{+}\|}_{W}^{2} {\|A_{+}^{-1}\|}_{W}^{2} {\|A_{-}^{-1}\|}_{W}^{2} 
\| {\|A-\widetilde{A}\|}_{W}.
\end{equation}
\end{theorem}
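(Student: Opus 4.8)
\textbf{Proof proposal for Theorem \ref{invplus}.}

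The plan is to treat the three estimates $(i)$, $(ii)$, $(iii)$ in turn, always reducing the problem to applying Lemma \ref{lem2} (for inverses of Banach-algebra elements) or Lemma \ref{lemxx} (for solutions of linear equations in a Banach space), with $\mathcal A$ and $\widetilde{\mathcal A}$ chosen as suitable Toeplitz operators. The existence of the canonical factorisation of $\widetilde A(t)$ under \eqref{estim} follows at once from the material in Section \ref{main_def}: write $A=A_-A_+$, so $T_A=T_{A_-}T_{A_+}$ is invertible with $T_A^{-1}=T_{A_+^{-1}}T_{A_-^{-1}}$, hence $\|T_A^{-1}\|\le \|A_+^{-1}\|_W\|A_-^{-1}\|_W$; then \eqref{estim} gives $\|T_A-T_{\widetilde A}\|\le \|A-\widetilde A\|_W\le q/\|T_A^{-1}\|<1/\|T_A^{-1}\|$, so $T_{\widetilde A}$ is invertible and $\widetilde A(t)$ admits a canonical factorisation, which we normalise by $\widetilde A_-(\infty)=A_0$.

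For $(i)$, I would apply Lemma \ref{lem2} in the Banach algebra $W^{p\times p}$ with $\mathcal A=A_+$ and $\widetilde{\mathcal A}=\widetilde A_+$, once I have an estimate for $\|A_+-\widetilde A_+\|_W$ in terms of $\|A-\widetilde A\|_W$ --- but that is circular, so instead the cleaner route is: $\widetilde A_+^{-1}=T_{\widetilde A}^{-1}(A_0^{-1})$-type identity is awkward, so I will rather use that $A_+^{-1}=(T_A^{-1}\mathbf 1)$ composed with the constant $A_0$. Concretely, since $A_-(\infty)=A_0$, one has $\mathcal P_+(A_-^{-1})=A_0^{-1}$ and $A_+^{-1}=T_{A^{-1}}(A_0^{-1}\cdot)$ acting on constants; the same holds with tildes. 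Thus $A_+^{-1}-\widetilde A_+^{-1}=(T_{A}^{-1}-T_{\widetilde A}^{-1})(A_0^{-1})\cdot A_0$ up to bookkeeping, and applying Lemma \ref{lem2} to the operators $T_A,T_{\widetilde A}$ gives $\|A_+^{-1}-\widetilde A_+^{-1}\|_W\le \|A_0\|\cdot\frac{1}{1-q}\|T_A^{-1}\|^2\|T_A-T_{\widetilde A}\|\le \frac{\|A_0\|}{1-q}\|A_+^{-1}\|_W^2\|A_-^{-1}\|_W^2\|A-\widetilde A\|_W$, which is \eqref{est1plus}.

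For $(ii)$, the identity $A_-=A A_+^{-1}$ (and $\widetilde A_-=\widetilde A\widetilde A_+^{-1}$) gives
$A_--\widetilde A_-=(A-\widetilde A)A_+^{-1}+\widetilde A(A_+^{-1}-\widetilde A_+^{-1})$;
I then bound $\|A_+^{-1}-\widetilde A_+^{-1}\|_W$ by \eqref{est1plus}, $\|\widetilde A\|_W\le \|A\|_W+\|A-\widetilde A\|_W$, and use $\|A-\widetilde A\|_W\le \frac{q}{\|A_+^{-1}\|_W\|A_-^{-1}\|_W}$ from \eqref{estim} to absorb the quadratic-in-$(A-\widetilde A)$ term into a term linear in $\|A-\widetilde A\|_W$, producing the three summands in \eqref{estAminustildeA}. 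For $(iii)$, I would write $A_+-\widetilde A_+=A_+(\widetilde A_+^{-1}-A_+^{-1})\widetilde A_+$ and need a bound on $\|\widetilde A_+\|_W$; this is where the strengthened hypothesis \eqref{newradius} enters, because $\|\widetilde A_+\|_W\le \|A_+\|_W+\|A_+-\widetilde A_+\|_W$ and one must close the resulting inequality for $\|A_+-\widetilde A_+\|_W$ --- the extra factor $(1-q)/(\|A_0\|\|A_-^{-1}\|_W\|A_+\|_W\|A_+^{-1}\|_W)$ in \eqref{newradius} is exactly what guarantees that the self-referential bound can be solved with the clean constant $\|A_0\|/(1-q)^2$ shown in \eqref{AtAplus}.

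The main obstacle, and the step deserving the most care, is $(iii)$: one must track how \eqref{newradius} makes the implicit inequality $\|A_+-\widetilde A_+\|_W\le C_1\|A-\widetilde A\|_W+C_2\|A_+-\widetilde A_+\|_W$ have $C_2<1$ (indeed $C_2\le q<1$ after substituting \eqref{newradius}), so that $\|A_+-\widetilde A_+\|_W\le \frac{C_1}{1-C_2}\|A-\widetilde A\|_W$, and then verify that $C_1/(1-C_2)$ collapses to the stated product of norms. The estimates $(i)$ and $(ii)$ are routine once the two identities $A_+^{-1}=(\text{constant part of }A_-^{-1})$-type relation and $A_-=AA_+^{-1}$ are in hand; $(iii)$ is where the bookkeeping is genuinely delicate.
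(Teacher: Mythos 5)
Your proposal is correct and follows essentially the same route as the paper: part (i) rests on the identity $T_A A_+^{-1}=A_0$ coming from the normalisation (your writing $T_{A^{-1}}$ and the stray $A_0^{-1}$ are slips for the inverse operator $T_A^{-1}$ and the right-hand side $A_0$, i.e. $A_+^{-1}=T_A^{-1}A_0$), part (ii) uses exactly the paper's decomposition $A_- -\widetilde A_- =(A-\widetilde A)A_+^{-1}+\widetilde A\,(A_+^{-1}-\widetilde A_+^{-1})$, and part (iii) is a perturbation argument for the inverse of $A_+^{-1}$ in $W_+^{p\times p}$. The only cosmetic differences are that you apply Lemma \ref{lem2} to the Toeplitz operators $T_A,T_{\widetilde A}$ where the paper applies Lemma \ref{lemxx} with $b=\widetilde b=A_0$, and that in (iii) you re-derive the fixed-point bound by hand (your $C_2=\|A_+\|_W\|A_+^{-1}-\widetilde A_+^{-1}\|_W\le q$ under \eqref{newradius}) instead of invoking Lemma \ref{lem2} with $\mathcal{A}=A_+^{-1}$, $\widetilde{\mathcal{A}}=\widetilde A_+^{-1}$; both versions give precisely the constants in \eqref{est1plus}--\eqref{AtAplus}.
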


\begin{proof} 
To prove the first inequality (\ref{est1plus}), we consider operator equations $T_{A}X=A_{0}$ and $T_{\widetilde{A}} \widetilde{X}=A_{0}$, that both have unique solutions. These solutions are matrix functions $X=A_{+}^{-1}$ and $\widetilde{X}=\widetilde{A}_{+}^{-1}$, respectively. Indeed, $T_{A} A_{+}^{-1}=T_{A_{-}} T_{A_{+}} A_{+}^{-1}= \mathcal{P}_{+} A_{-}  \mathcal{P}_{+} A_{+} A_{+}^{-1}=\mathcal{P}_{+} A_{-} I_{p}=\mathcal{P}_{+} A_{-}=A_-(\infty)=A_{0}$. Similar arguments apply for the second equation.

Utilising Lemma \ref{lemxx} with $b=\widetilde b =A_0$, we have
$$
{\| A_{+}^{-1}-\widetilde{A}_{+}^{-1}\|}_{W}= {\| X-\widetilde{X}\|}_{W} 
\leq \frac{\|A_{0}\|}{1-q}\, {\|T_{A}^{-1}\|}^{2} \,\| T_{A}-T_{\widetilde{A}}\| .
$$
Finally, since $\|T_{A}^{-1}\| \le {\| A_{+}^{-1}\|}_{W} {\|A_{-}^{-1}\|}_{W}$ and 
$\| T_{A}-T_{\widetilde{A}}\|\leq {\|A-\widetilde{A}\|}_{W} $, we arrive at (\ref{est1plus}).

Now we can prove the continuity of the factor $A_{-}$ with its explicit estimate (ii).
Since both matrix functions $A(t)$ and $\tilde A(t)$ admit canonical factorisations, the following identity is valid:
\[
{{A}_{-}}-{{\widetilde{A}}_{-}}=(A-\widetilde{A})A_{+}^{-1}+\widetilde{A} (A_{+}^{-1}-\widetilde{A}_{+}^{-1}),
\]
leading to the inequality
\begin{equation}\label{intermid}
{\|A_{-}-\widetilde{A}_{-}}\|_{W} \leq {\|A_{+}^{-1}\|}_{W} \|A-\widetilde{A}\|+\|\widetilde{A}\|_{W} \|A_{+}^{-1}-\widetilde{A}_{+}^{-1}\|.
\end{equation}
Utilising (\ref{estim}), we get an estimate of the first multiplayer of the second term, $\|\widetilde{A}\|_{W} $, in the equality (\ref{intermid}) in the form
$$
\| \widetilde{A}\|_{W} \leq \|A\|_{W} +{\|A-\widetilde{A}\|}_{W} 
\leq  \|A\|_{W}+\frac{q}{ {\|A_{+}^{-1}\|}_{W} {\|A_{-}^{-1}\|}_{W}},
$$
while the statement (i) of this theorem provides an estimate for the second multiplayer.
Substituting those two inequalities into (\ref{intermid}) we arrive at \eqref{estAminustildeA}.

Finally, we can prove the statement (iii), that represents an explicit stability
condition for the factor $A_{+}(t)$ for a small perturbation of the matrix function $A(t)$.

Since condition (\ref{newradius}) is
stronger than \eqref{estim}, it also guarantees an existence of the canonical factorisation 
$\widetilde{A}(t)$ 
fixed by its limiting value $A_-(\infty)=A_0$. 
Utilising Lemma \ref{lem2} to elements
$\mathcal{A}=A_{+}^{-1}(t)$, 
$\widetilde{\mathcal{A}}=\widetilde{A}_{+}^{-1}(t)$ considered in the Banach algebra
$W_{+}^{p\times p}$, and taking into account the statement (i), that is the inequality \eqref{est1plus}  proven above, we arrive at  \eqref{AtAplus}. 
Condition (\ref{morestrict}) is  fulfilled if we take a new radius of the vicinity of $A(t)$.
 \end{proof}

To finalise this subsection, we note that the explicit estimates for the absolute errors
$
\|A_{+}-\widetilde{A}_{+}\|_{W},\,  
 \| A_{-}-\widetilde{A}_{-}\|_{W}
$
were obtained only for canonical factorisation of the original matrix function $A(t)$ of an arbitrary size $p\times p$.

\subsection{An explicit estimates for factors $a_\pm(t)$ for matrix function $a(t)$ from (\ref{aform})} \label{estimapprox}
In this subsection, we specify the results for matrix function $a(t)$ in the form (\ref{aform}) for $\theta=2\nu$ and assume that the partial indices are equal. Then we can always consider a new matrix function $b(t)=t^{-\nu}a(t)$ admitting classic canonical factorisation ($\theta=0$). Below, we assume without lost of generality, that $\theta=0$ for both matrix functions $a(t)$ and 
$a_N(t)$. We will assume that the factorisation is normalized by the condition $a_-^{(N)}(\infty)=I_2$.
Applying Th.\ref{invplus} to the matrix function $a_N(t)$, we can formulate
\begin{corollary}\label{cor5.1}
Let for some $N$, $\zeta_1,\zeta_2$ we have
$
\|a-a_N\|_W<\delta_N(\zeta_1,\zeta_2),
$
$a_N(t)$ admits the canonical factorisation.
Then 
\begin{equation}
\label{5_1a}
  \|a_+^{-1}-(a_+^{(N)})^{-1}\|_W<\frac{\|(a_+^{(N)})^{-1}\|_W^2\,
\|(a_-^{(N)})^{-1}\|_W^2}{1-q_N(\zeta_1,\zeta_2)}\delta_N(\zeta_1,\zeta_2),
\end{equation}
if $q_N(\zeta_1,\zeta_2):=\delta_N(\zeta_1,\zeta_2)\|\bigl(a_+^{(N)}\bigr)^{-1}\|_W\,\|\bigl(a_-^{(N)}\bigr)^{-1}\|_W<1$;
\begin{multline}\label{quraccm}
 \|a_--a_-^{(N)}\|_W< \delta_-=\Bigl(\|(a_+^{(N)})^{-1}\|_W+
\frac{\|a_N\|_W\,\|(a_+^{(N)})^{-1}\|_W^2
\|(a_-^{(N)})^{-1}\|_W^2}{1-q_N(\zeta_1,\zeta_2)}+\\
 \frac{q_N(\zeta_1,\zeta_2)\,\|(a_+^{(N)})^{-1}\|_W\,
\|(a_-^{(N)})^{-1}\|_W}{1-q_N(\zeta_1,\zeta_2)}\Bigr)\delta_N(\zeta_1,\zeta_2),
\end{multline}
if $q_N(\zeta_1,\zeta_2)<1$;
\begin{equation}\label{quraccp}
 \|a_+-a_+^{(N)}\|_W<\delta_+=\frac{\|a_+^{(N)}\|_W^2\,\|(a_+^{(N)})^{-1}\|_W^2\,
\|(a_-^{(N)})^{-1}\|_W^2}{{\bigl(1-q^+_N(\zeta_1,\zeta_2)\bigr)^2}}\delta_N(\zeta_1,\zeta_2),
 \end{equation}
if $\gamma_N(\zeta_1,\zeta_2):=4\delta_N(\zeta_1,\zeta_2)
{\|a_+^{(N)}\|_W\|(a_+^{(N)})^{-1}\|_W^2\|(a_-^{(N)})^{-1}\|_W^2}\leq 1.$

Here $q^+_N(\zeta_1,\zeta_2)=\frac{1}{2}\bigl(1-\sqrt{1-\gamma_N(\zeta_1,\zeta_2)}\,\bigr).$
\end{corollary}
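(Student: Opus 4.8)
The plan is to apply Theorem~\ref{invplus} directly to the matrix function $a_N(t)$ in the role of the ``base'' matrix function and to $a(t)$ in the role of the perturbed matrix function, since by hypothesis $\|a-a_N\|_W<\delta_N(\zeta_1,\zeta_2)$ and $a_N(t)$ admits the canonical factorisation $a_N(t)=a_-^{(N)}(t)a_+^{(N)}(t)$ normalised by $a_-^{(N)}(\infty)=I_2$. Here $A_0=I_2$, so $\|A_0\|=1$, and the quantities $\|A_+^{-1}\|_W$, $\|A_-^{-1}\|_W$, $\|A\|_W$ appearing in Theorem~\ref{invplus} become $\|(a_+^{(N)})^{-1}\|_W$, $\|(a_-^{(N)})^{-1}\|_W$, $\|a_N\|_W$ respectively. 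First I would identify the quantity $q$ of Theorem~\ref{invplus}: condition \eqref{estim} applied to $(a_N,a)$ reads $\|a-a_N\|_W\le q/(\|(a_+^{(N)})^{-1}\|_W\|(a_-^{(N)})^{-1}\|_W)$, and since $\|a-a_N\|_W<\delta_N(\zeta_1,\zeta_2)$ we may take $q=q_N(\zeta_1,\zeta_2)=\delta_N(\zeta_1,\zeta_2)\|(a_+^{(N)})^{-1}\|_W\|(a_-^{(N)})^{-1}\|_W$; the standing assumption $q_N(\zeta_1,\zeta_2)<1$ is exactly what is needed to invoke the theorem. Then inequality \eqref{est1plus} immediately yields \eqref{5_1a} after substituting $\|A_0\|=1$ and $\|A-\widetilde A\|_W=\|a-a_N\|_W<\delta_N(\zeta_1,\zeta_2)$, and \eqref{estAminustildeA} yields \eqref{quraccm} in the same way, replacing $\|A\|_W$ by $\|a_N\|_W$ and $q$ by $q_N(\zeta_1,\zeta_2)$.

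For the third estimate \eqref{quraccp}, the plan is to use statement (iii) of Theorem~\ref{invplus}, i.e.\ inequality \eqref{AtAplus}, which requires the stronger smallness condition \eqref{newradius}. The subtlety is that \eqref{newradius} is written in terms of a secondary parameter (also called $q$) that controls the vicinity radius in Lemma~\ref{lem2}; one must choose this parameter optimally. Concretely, writing $c_N:=\|a_+^{(N)}\|_W\,\|(a_+^{(N)})^{-1}\|_W^2\,\|(a_-^{(N)})^{-1}\|_W^2$, condition \eqref{newradius} with $\|A_0\|=1$ becomes $\delta_N(\zeta_1,\zeta_2)\,c_N\le q(1-q)$ (after absorbing $\|a_+\|_W\|a_+^{-1}\|_W$ appropriately into $c_N$ via the square of $\|a_+^{(N)}\|_W$). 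The function $q\mapsto q(1-q)$ is maximised at $q=1/2$ with value $1/4$, but we want the \emph{smallest} admissible $q$, namely the smaller root of $q(1-q)=\delta_N c_N$, which is $q^+_N(\zeta_1,\zeta_2)=\tfrac12\bigl(1-\sqrt{1-4\delta_N(\zeta_1,\zeta_2)c_N}\bigr)=\tfrac12\bigl(1-\sqrt{1-\gamma_N(\zeta_1,\zeta_2)}\bigr)$ with $\gamma_N(\zeta_1,\zeta_2)=4\delta_N(\zeta_1,\zeta_2)c_N$; this quantity is real and in $[0,1)$ precisely when $\gamma_N(\zeta_1,\zeta_2)\le 1$, which is the stated hypothesis. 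Feeding this $q^+_N$ into \eqref{AtAplus} (again with $\|A_0\|=1$) produces \eqref{quraccp}.

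The main obstacle I expect is bookkeeping rather than conceptual difficulty: one has to check carefully that the combination of norm factors in \eqref{newradius} really collapses to the condition $\gamma_N(\zeta_1,\zeta_2)\le 1$ with $\gamma_N$ as defined, i.e.\ that the product $\|A_0\|\|A_-^{-1}\|_W\|A_+\|_W\|A_+^{-1}\|_W$ in the denominator of \eqref{newradius}, together with the factor $\|A_+^{-1}\|_W\|A_-^{-1}\|_W$ already present, recombines into $\|a_+^{(N)}\|_W\|(a_+^{(N)})^{-1}\|_W^2\|(a_-^{(N)})^{-1}\|_W^2$ up to the single power of $\|a_+^{(N)}\|_W$ versus the square appearing in \eqref{AtAplus}. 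Once that identification is made, the choice $q=q^+_N$ as the smaller root of $q(1-q)=\gamma_N/4$ is forced, and substituting $(1-q^+_N)^2$ into the denominator of \eqref{AtAplus} gives exactly \eqref{quraccp}. Everything else is a direct transcription of Theorem~\ref{invplus} with $A\rightsquigarrow a_N$, $\widetilde A\rightsquigarrow a$, $A_0=I_2$, so no new analytic input is needed beyond what has already been established.
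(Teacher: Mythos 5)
Your proposal is correct and follows essentially the same route as the paper: apply Theorem~\ref{invplus} with $A\rightsquigarrow a_N$, $\widetilde A\rightsquigarrow a$, $A_0=I_2$, take $q=q_N(\zeta_1,\zeta_2)$ for \eqref{5_1a} and \eqref{quraccm}, and for \eqref{quraccp} choose $q$ as the smaller root of $q(1-q)=\tfrac14\gamma_N(\zeta_1,\zeta_2)$ (which forces $\gamma_N\le 1$) before invoking item (iii). The paper's own proof is exactly this, merely treating the first two estimates as immediate and writing out only the choice of $q^+_N$.
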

\begin{proof}
The proof is required only for estimate (\ref{quraccp}). Let us define the parameter $q^+_N(\zeta_1,\zeta_2)$ by the equation 
\begin{equation}\label{qpluseq}
q(1-q){=}\frac{1}{4}\gamma_N(\zeta_1,\zeta_2).
\end{equation}
Since $\max_{[0,1]}q(1-q)=\frac{1}{4}$, the condition $\gamma_N(\zeta_1,\zeta_2)\leq 1$ must be met. We have two roots of Eq.(\ref{qpluseq}), but the root is such that $0<q\leq 1/2$, that is $q^+_N(\zeta_1,\zeta_2)=\frac{1}{2}\bigl(1-\sqrt{1-\gamma_N(\zeta_1,\zeta_2)}\,\bigr)$, gives the better estimate for $\|a_+-a_+^{(N)}\|_W$. Now all conditions of Th.\ref{invplus}, item (iii), are fulfilled and applying it we arrive to (\ref{quraccp}).
\end{proof}

Note that, since the factorisation problem for a Laurent matrix polynomial $a_N(t)$ can be solved explicitly (see \cite{Adukov92}), the estimates for 
$\|a_--a_-^{(N)}\|_W$ and $\|a_+-a_+^{(N)}\|_W$ can be obtained constructively in terms of some characteristics of the functions 
$\alpha_+(t)$, $\beta_-(t)$.

Suppose that the Laurent coefficients of $\alpha_+^{(N)}(t)$, $\beta_-^{(N)}(t)$ belong  to the field $\mathbb Q(i)$. Since $\det a_N(t)=1$, the factorisation problem for $a_N(t)$ can be solved exactly \cite{AAM22}. We can construct the exact factorisation with help of the package {\sf ExactMPF} \cite{AAM22}. 
If the Laurent coefficients $\alpha_k$, $\beta_k$  do not belong the field $\mathbb Q(i)$, we can find the best rational approximations for them and include the respective corrections in $\delta_N(\zeta_1,\zeta_2)$.

\section{Numerical experiments}\label{exp}

\subsection{Matrix function admitting canonical factorisation ($\theta= 0$).} 
First we consider matrix function $a(t)$:
\begin{equation}
\label{case1}
a(t)=\begin{pmatrix}
1&t^{-2} \sqrt{ k_1^2-t^2}\\[2mm]
\sqrt{ k_2^2-t^2} &1+t^{-2} \sqrt{ k_2^2-t^2} \sqrt{ k_1^2-t^2}
\end{pmatrix},\quad k_1^{-1},k_2>1.
\end{equation}

Here, $\alpha_+(t)$ is the branch of $\sqrt{ k_2^2-t^2}$, $k_2>1$, that is analytic into the domain $\mathbb C \setminus \bigl((-\infty, -k_2)
\cup (k_2, +\infty)\bigr)$, and has
the positive value at $t=0$.  

For the function $\beta_-(t)=t^{-2}\sqrt{ k_1^2-t^2}$, $k_1<1$, we consider the branch of the square root $\sqrt{ k_1^2-t^2}$ with the cut $(-k_1, k_1)$ fixed by value of $t^{-1}\sqrt{ k_1^2-t^2}$ at $t=\infty$ equal to $i$.

The function $\alpha_+(t)$ is expanded into the series
$$
\alpha_+(t)=k_2\Bigl(1-\frac{t^2}{2k_2^2}-\sum_{n=2}^{\infty}
\frac{(2n-3)!!}{(2n)!!}\frac{t^{2n}}{k_2^{2n}}\Bigr),
$$
that converges absolutely  in the closet disk $|t|\leq k_2$.
So, $\alpha_+(t)$
is continuous on $|t|\leq k_2$, analytic into $|t|< k_2$, and
$\alpha_+(t)$, $|t|=1$, belongs the subalgebra $W_+$.
We can easily prove that $M_+(\zeta_2)=\sqrt{k_2^2+\zeta_2^2}$ for
$1<\zeta_2\leq k_2$, where function $M_+(\zeta_2)$ has been defined in \eqref{alpha_est}. 

Function $\beta_-(t)$ is expanded into the series
$$
\beta_-(t)=\frac{i}{t}\Bigl(1-\frac{k_1^2}{2t^2}-\sum_{n=2}^{\infty}
\frac{(2n-3)!!}{(2n)!!}\frac{k_1^{2n}}{t^{2n}}\Bigr),
$$
that converges absolutely  in the close domain $|t|\geq k_1$.
Moreover, $\beta_-(t)$ is continuous on $|t|\geq k_1$, analytic into 
$|t|> k_1$, and $\beta_-(t)$, $|t|=1$, belongs the subalgebra $W_-^0$.
It is  easy to prove that 
$M_-(\zeta_1)=\frac{\sqrt{k_1^2+\zeta_1^2}}{\zeta_1^2}$ for $k_1\leq \zeta_1<1$.
Thus, the admissible pairs $(\zeta_1,\zeta_2)$ belong to the rectangular
$[k_1,1)\times (1, k_2]$.

Now we have $\|a-a_N\|_W<\delta_N(\zeta_1,\zeta_2)$ for $k_1\leq\zeta_1<1$, $1<\zeta_2\leq k_2$, with the parameter $\delta_N(\zeta_1,\zeta_2)$ defined in
\eqref{delta_N12}
\begin{multline}\label{estdelta}
\delta_N(\zeta_1,\zeta_2)=\\\Biggl(1+\frac{\zeta_2\sqrt{k_2^2+\zeta_2^2}}{\zeta_2-1}\Biggr)\frac{\zeta_1^{N-1}\sqrt{k_1^2+\zeta_1^2}}{1-\zeta_1}+\Biggl(1+\frac{\sqrt{k_1^2+\zeta_1^2}}{\zeta_1(1-\zeta_1)}\Biggr)\frac{\sqrt{k_2^2+\zeta_2^2}}{\zeta_2^N(\zeta_2-1)}.
\end{multline}

Since the optimal choice of the parameters $\zeta_1,\zeta_2$  is desirable but not mandatory, we choose as optimal values 
$\zeta_1,\zeta_2$ the solution of the minimization problem on a compact set $[k_1,1-\epsilon]\times [1+\epsilon, k_2]$ for sufficiently small $\epsilon>0$. For these purposes we use the package {\sf Optimization} of the system Maple.

\begin{example}\label{ex1A}
Now we present an approximate factorisation of matrix function \eqref{case1} with a guaranteed accuracy 
for a fixed values of $k_1=1/5$, $k_2=5$. With help procedure {\sf Minimize} of the  package {\sf Optimization} we verify that, for $\epsilon=0.01$ and for all $N\in [1, 100]$, optimal values are $\zeta_1=1/5$, $\zeta_2=5$, thus  
\begin{equation}
\label{delta_6_1}
\delta_N(1/5,5)=\frac{15+2\sqrt{2}}{4}\cdot 5^{1-N}\approx 22.2855\cdot 5^{-N}. 
\end{equation}

For any fixed $N_0$ and  $1\leq N\leq N_0$, the evident inequality 
\begin{equation}
\label{est6_1}
\|a_{N_0}-a_N\|_W\leq \|a-a_N\|_W\leq \delta_N.
\end{equation}
We found the norms 
$\|a_{30}-a_N\|_W$, $\|a_{40}-a_N\|_W$, $\|a_{50}-a_N\|_W$ and compared them with the calculated values of $\delta_N$ 
for $N\in[1,30]$. The results are shown in Figure \ref{5_1_a} and Table 1. All calculations were carried out with 
$\rm Digits=10$. The values for $\|a_{50}-a_N\|_W$ completely coincide with $\|a_{40}-a_N\|_W$. As a result, we conclude that $\|a_{40}-a_N\|_W=\|a-a_N\|_W$ for $1\leq N \leq 30$. Note that the inequality $\|a-a_N\|_W<\delta_N$ is far from being exact: its left-hand side is estimated as $6^{-N}$  (see Figure~\ref{5_1_a}), while the right-hand side is of the order $22.3\cdot 5^{-N}$. 
From the first glance, since we know representations of the functions $a(t)$ and $a_N(t)$, we can compute also the norm $\|a-a_N\|_W$ itself:
\[
\|a-a_N\|_W = \sum\limits_{k=N}^{\infty }{||a_{k}||}. 
\]
Unfortunately, excluding the case when the series computed exactly, we can have only an estimate of the norm from below, while the values of $\delta_N$ even is being sometimes an overestimation, guaranties the inequality from the other side. 
Moreover, the value of $\delta_N$ is also involved in the computations of the second condition $q_N<1$ that is equally important for the Theorems~\ref{basethm} and \ref{basethm1} to be valid. 

\begin{figure}[h]
\centering
\includegraphics[width=0.7\linewidth]{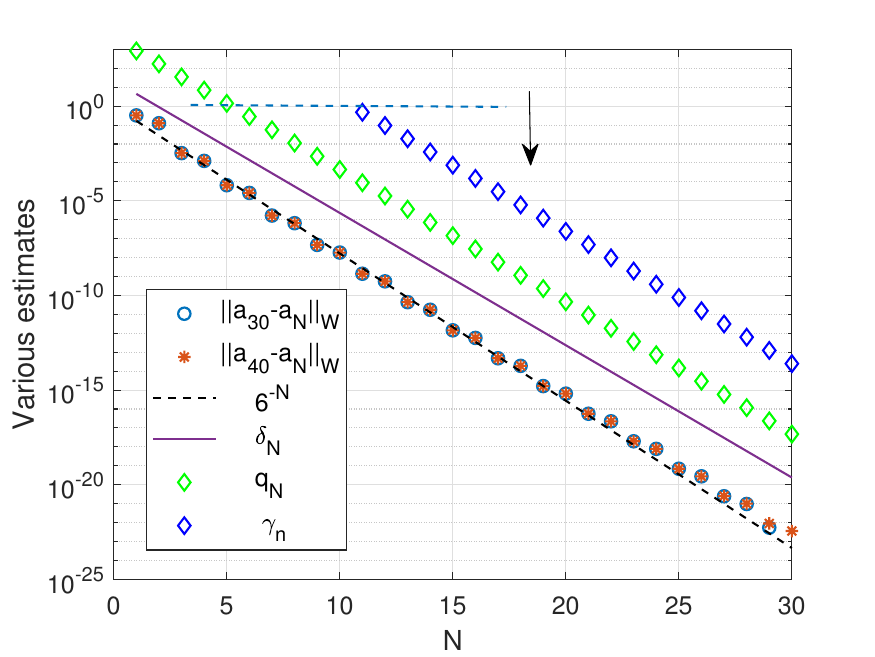}
 \caption{Verification of convergence of the approximation and estimate (\ref{estdelta}) for Example \ref{ex1A}. }
	\label{5_1_a}
\end{figure}

Note that $\delta_N$ is also involved in the estimates of the factors \eqref{5_1a}, \eqref{quraccm}. 
Since the Laurent coefficients of $\alpha_+^{(N)}(t)$, $\beta_-^{(N)}(t)$ belong
the field $\mathbb Q(i)$ and $\det a_N(t)=1$, the factorisation problem
for $a_N(t)$ can be solved exactly. We apply the package {\sf ExactMPF} for solving the factorisation problems for $a_N(t)$, $1\leq N\leq 30$. 

It turns out that all $a_N(t)$ admit the canonical factorisations, which we normalised by the conditions 
$a_-^{(N)}(\infty)=I_2$. In Table 1, we also show the 
values of $\|\bigl(a_\pm^{(N)}\bigr)^{-1}\|_W$ and $q_N$. For clarity, before to be included into the tables, all exact rational numbers are converted into the floating format preserving their meaningful length.

Starting from $N=6$, the conditions of Theorem \ref{basethm} are fulfilled. Therefore, the matrix functions $a(t)$ and its approximation, $a_N(t)$, both admit canonical factorisation.

Since the formulas for factors are cumbersome, we present them in supplementary material.

In Table 2, we also present the values  of the right-hand sides of the inequalities (\ref{quraccm}), (\ref{quraccp})
denoted by $\delta_-^{(N)}$, $\delta_+^{(N)}$, respectively. Note that the equality (\ref{quraccp}) is valid if $\gamma_N \leq 1$ (see Corollary \ref{cor5.1}). From Table 1 we see that this condition is fulfilled from $N=11$. Hence we calculate $\delta_{+}^{(N)}$ beginning from $N=11$.
They are estimations of the respective norms  $\|a_\pm^{(N)}-a_\pm\|_W$. 
For example, $\|a_+-a_+^{(15)}\|_W<1.275\cdot \,10^{-3}$ and 
$\|a_--a_-^{(15)}\|_W<3.490\cdot \,10^{-4}$. In fact, the actual accuracy is much higher.
Utilising the same line of reasoning as for the matrix functions above, we present in Figure~\ref{5_1_b} 
the norms $\|a_\pm^{(40)}-a_\pm^{(N)}\|_W$ and their estimates $\delta_\pm^{(N)}$ for $1\leq N\leq 30$. 
Again, theoretical estimate for the factors are rather crude in this case. 

The respective data are given in Table 2 in Supplementary material, where we also present the norms $\|a_\pm^{(N)}-a_\pm^{(N-1)}\|_W$. 

\begin{figure}[h!]
\centering
\includegraphics[width=0.7\linewidth]{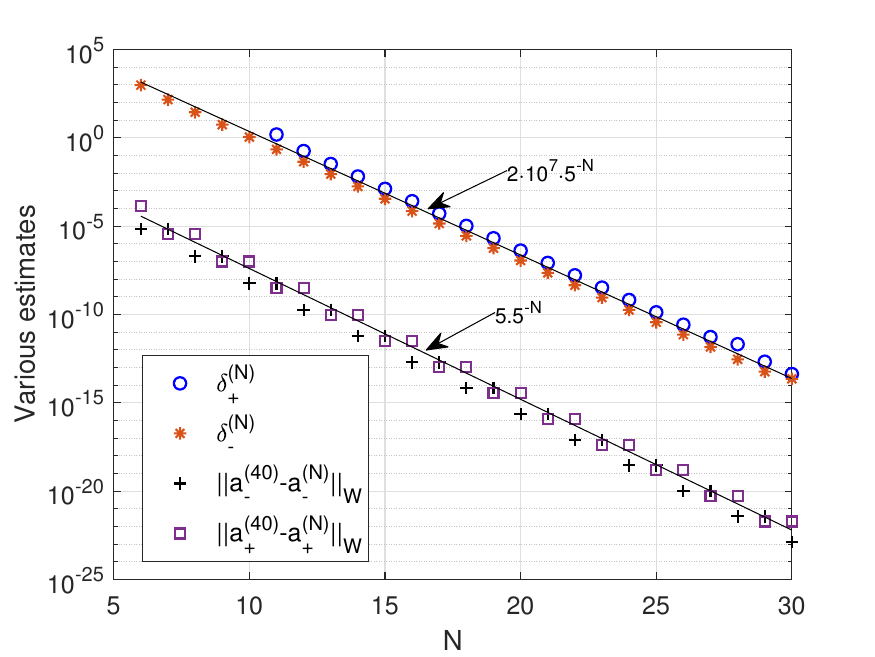}
  \caption{Estimates $\delta_\pm^{(N)}$ and 
the norms $\|a_\pm^{(40)}-a_\pm^{(N)}\|_W$ for Example \ref{ex1A}. }
 	\label{5_1_b}
\end{figure}
\end{example}

\subsection{Matrix function with equal partial indices, 
($\theta=2\nu$).}

Now we consider other example.
$$
a(t)=\begin{pmatrix}
1&\frac{1}{\sqrt{ k_1^2+t^2}}\\[3mm]
e^{k_2t}&t^{2\nu}+\frac{e^{k_2t}}{\sqrt{ k_1^2+t^2}}
\end{pmatrix},\quad 0<k_1<1,\quad k_2\in\mathbb C.
$$
Here $\alpha_+(t)=e^{k_2t}$, $k_2\in\mathbb C$. It is an entire function, represented by its Taylor series 
$$\alpha_+(t)=\sum_{n=0}^{\infty}\frac{k_2^nt^n}{n!},
$$ 
and the function $M_+$ defined in \eqref{alpha_est} is $M_+(\zeta_2)=e^{|k_2|\zeta_2}$, $1<\zeta_2<\infty$.

The other function, $\beta_-(t)=(k_1^2+t^2)^{-1/2}$, $0<k_1<1$, with the branch cut $(-k_1i, k_1 i)$ on the imaginary axis, while the branch is fixed by the limiting value $t\beta_-(t)\to1$ as $t=\infty$ is equal to $1$. The function $\beta_-(t)$ is analytic into 
$|t|> k_1$ and  $\beta_-(t)$ is expanded into the series
$$
\beta_-(t)=\frac{1}{t}\Bigl(1+\sum_{n=1}^{\infty}
(-1)^{n}\frac{(2n-1)!!}{(2n)!!}\frac{k_1^{2n}}{t^{2n}}\Bigr),
$$
that converges absolutely  in the open domain $|t|> k_1$.
Moreover, $\beta_-(t)$, for $|t|=1$, belongs the subalgebra $W_-^0$.
It is  easy to prove that 
$M_-(\zeta_1)=(\zeta_1^2-k_1^2)^{-1/2}$ for $k_1< \zeta_1<1$.
From (\ref{estaaN2}) we have for $k_1<\zeta_1<1$, $1<\zeta_2<\infty$:
\begin{multline*}
\|a-a_{N}\|_W\leq\delta_N(\zeta_1,\zeta_2)=\\
\Biggl(1+\frac{\zeta_2e^{|k_2|\zeta_2}}{\zeta_2-1}\Biggr)\frac{\zeta_1^{N+1}}{(1-\zeta_1)\sqrt{\zeta_1^2-k_1^2}}+
\Biggl(1+\frac{\zeta_1}{(1-\zeta_1)\sqrt{\zeta_1^2-k_1^2}}\Biggr)
\frac{e^{|k_2|\zeta_2}}{\zeta_2^N(\zeta_2-1)}.
\end{multline*}

\begin{example}\label{ex2}
Let $k_1=1/5$, $k_2=1$, $\theta=2\nu=6$. Choosing an optimal choice of the parameters $\zeta_1$, $\zeta_2$, we observed that the pair has small effect on the factorisation process. Thus, we took them basing on the convenience of the numerical calculations. 

Numerical experiments lead us to the following values:
$\zeta_1=1/4$, $\zeta_2=4$, and consequently:
\begin{equation}
\label{delta_N_6_2}
\delta_N = \frac{109 e^4+60}{27}\frac{1}{\,4^N}\approx 222.6364 \cdot 4^{-N}.
\end{equation}
Similarly to the Example \ref{ex1A}, we compute the norms 
$\|a_{30}-a_N\|_W$, $\|a_{40}-a_N\|_W$, $\|a_{50}-a_N\|_W$ and compared them with the calculated values of $\delta_N$ for $N\in[1,30]$. The values for $\|a_{50}-a_N\|_W$ completely coincide with $\|a_{40}-a_N\|_W$ with the chosen accuracy (DIGID=10). 
Thus we can assume that the lower bound $\|a_{40}-a_N\|_W\leq \|a-a_N\|_W$ is exact and its left hand side fully characterizes the accuracy of the estimate
(\ref{estdelta}). The results are shown in Figure \ref{5_2_a} and in the Table 3.

\begin{figure}[h!]
\centering
\includegraphics[width=0.7\linewidth]{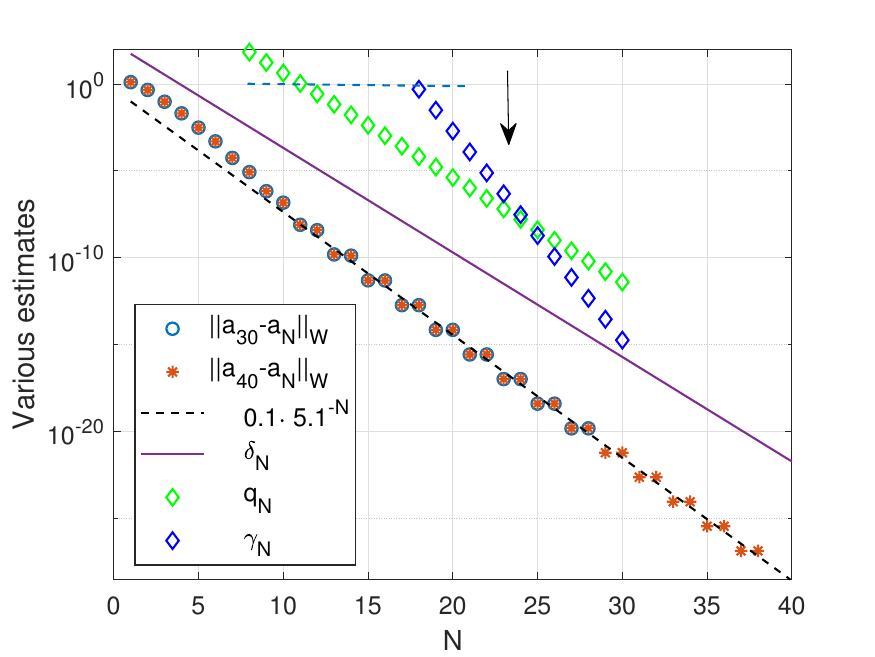}

 \caption{Verification of accuracy of the estimate (\ref{estdelta}) for Example \ref{ex2}.}
 	\label{5_2_a}
\end{figure}

\begin{figure}[h!]
\centering
\includegraphics[width=0.7\linewidth]{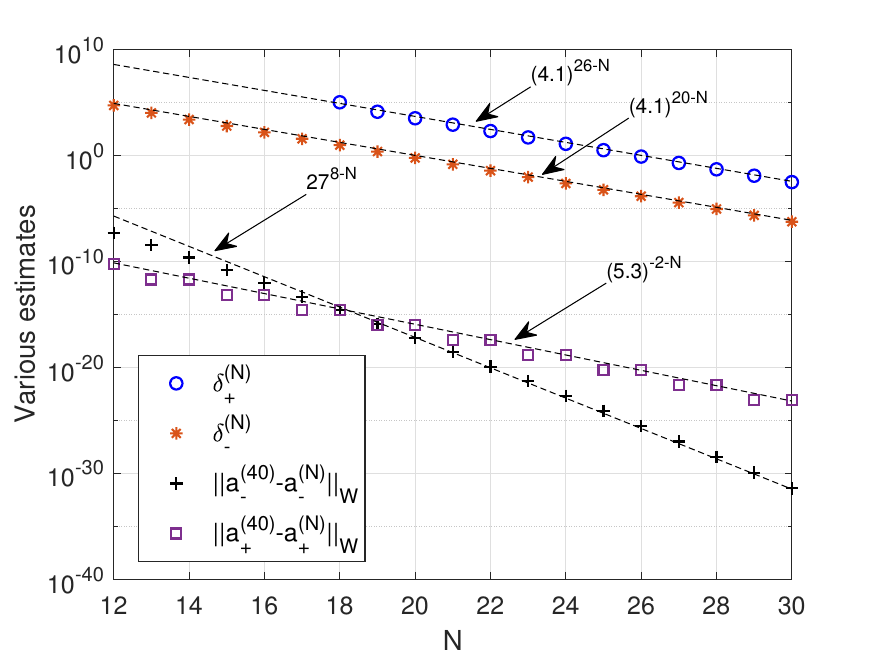}
 \caption{Comparison between $\delta_\pm^{(N)}$ with 
$\|a_\pm^{(40)}-a_\pm^{(N)}\|_W$ for Example \ref{ex2}. }
 	\label{5_2_b}
\end{figure}

As above, the Laurent coefficients of $\alpha_+^{(N)}(t)$, 
$\beta_-^{(N)}(t)$ belong
the field $\mathbb Q(i)$ and $\det a_N(t)=1$, hence the factorisation problem for $a_N(t)$ can be solved exactly with the package 
{\sf ExactMPF}. 

It turns out that all $a_N(t)$, $3\leq N\leq 30$, admit the stable factorisations with the equal indices $\rho_1=\rho_2=\nu=3$, in other words,
$a_N(t)=t^3a_-^{(N)}(t)a_+^{(N)}(t)$. We normalised the factorisations by the conditions $a_-^{(N)}(\infty)=I_2$. Obviously that estimates (\ref{quraccm}), (\ref{quraccp}) hold true in this case.

In Table 3 we show the 
values of $\|\bigl(a_\pm^{(N)}\bigr)^{-1}\|_W$ and $q_N(1/4,4)$ (see \eqref{q_n}).
From the table we see that starting from $N=12$, the stability criterion is fulfilled. Therefore, the matrix function $a(t)$ admits the stable factorisation with equal indices $\rho_1=\rho_2=\nu=3$ and the stable factorisation of $a_N(t)$ gives an approximate stable factorisation of $a(t)$.

The calculations of the guaranteed accuracy give more or less satisfactory results for $N=30$: 
$\|a_+-a_+^{(30)}\|_W<3.012\cdot \,10^{-3}$ and 
$\|a_--a_-^{(30)}\|_W<5.474\cdot \,10^{-7}$.
The values of the norms 
$\|a_+^{(40)}-a_+^{(30)}\|_W=3.779\cdot \,10^{-32}, 
\|a_-^{(40)}-a_-^{(30)}\|_W= 8.741\cdot \,10^{-24}
$
show that the actual accuracy is much higher then predicted by the estimate.
Comparison $\delta_\pm^{(N)}$ with these norms are presented in Figure \ref{5_2_b} for all values of $N$ and in the Table 4.

\end{example}

\subsection{The stable factorisation of $a(t)$, $\theta=2\nu+1$.}

Now we consider the following matrix function 
$$
a(t)=\begin{pmatrix}
1&\frac{e^{k_1/t}}{t}\\[2mm]
e^{k_2t}& t^\theta+\frac{e^{k_1/t+k_2t}}{t}
\end{pmatrix},\quad k_1, k_2\in \mathbb C,\quad  \theta=2\nu+1.
$$
Here the function
$\beta_-(t)=t^{-1}e^{k_1/t}$ is analytic on $\mathbb C \setminus \{0\}$, and $\beta_-(\infty)=0$, while
$\alpha_+(t)=e^{k_2t}$ is an entire function. It is easy to check that
$M_-(\zeta_1) = \frac{1}{\zeta_1}e^{|k_1|/\zeta_1}$ and 
$M_+(\zeta_2)=e^{|k_2|\zeta_2}$, and (\ref{estaaN2}) gives for $0<\zeta_1<1$, $1<\zeta_2<\infty$:
$$
\delta_N(\zeta_1,\zeta_2)=
\Bigl(1+\frac{\zeta_2e^{|k_2|\zeta_2}}{\zeta_2-1}\Bigr)\frac{\zeta_1^{N}
e^{|k_1|/\zeta_1}}{1-\zeta_1}+\Bigl(1+\frac{e^{|k_1|/\zeta_1}}{1-\zeta_1}\Bigr)\frac{e^{|k_2|\zeta_2}}{\zeta_2^N(\zeta_2-1)},
$$

In the case of $\theta=2\nu+1$, the stability of the indices occurs at $\rho_1=\nu$, $\rho_2=\nu+1$.  
Unfortunately, estimates of the accuracy of the calculation of $\|a_\pm-a_\pm^{(N)}\|_W$ are not available, since the issue of the stability of P-normalisation has not been completed yet.

\begin{example}\label{ex3}
Let $k_1=1$, $k_2=1/2$, $\theta=-7$, i.e. $\nu=-4$.
If we put $\zeta_1=1/10$, $\zeta_2=10$, then
$$
\delta_N=\Bigl(
\frac{10}{9}e^{10}+\frac{110}{81}e^{15}+\frac{1}{9}e^{5}\Bigr)\frac{1}{10^N}\approx 
4.4639\cdot10^{6-N}.
$$

These values of the parameters $\zeta_1$, $\zeta_2$ give sufficiently
small values of $\delta_N$ beginning $N=13$ (see Table 5). 

Calculations with the package {\sf ExactMPF} show that for 
$3\leq N\leq 30$ the matrix functions $a_N(t)$ admit the stable factorisations with the indices $\rho_1=-4$, $\rho_2=-3$. The factorisation constructing by {\sf ExactMPF} admit $I_2$-normalisations
starting from $N=4$. We carry out these normalisations by formula 
(\ref{Inorm}). Respective estimate in graphical form are presented in Figure~\ref{5_3_a}. Table 5 with the data related to this example is given in the Supplementary Materials.

Then we calculated $\|\bigl(a_\pm^{(N)}\bigr)^{-1}\|_W$, and the parameter $q_N$ (see Table 5). The criterion $q_N<1$ is fulfilled beginning $N=24$. Hence the matrix function $a(t)$ admits the stable factorisation with the indices $\rho_1=-4$, $\rho_2=-3$.

\begin{figure}[h!]
\centering
\includegraphics[width=0.7\linewidth]{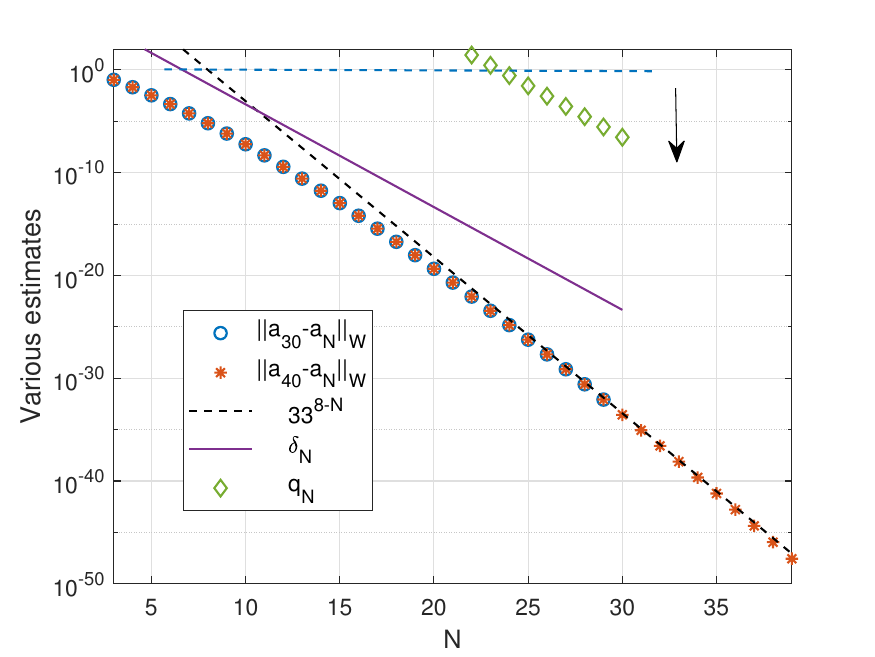}

\caption{Accuracy of the estimate (\ref{estdelta}) for Example \ref{ex3}. }
 	\label{5_3_a}
\end{figure}
\end{example}

\section{Conclusions}
We have developed a method allowing to confidently identify whether a strictly nonsingular  $2\times 2$  matrix function possesses stable set of partial indices simultaneously perform an approximate factorisation. For this purpose, we use 
the {\sf ExactMPF} package working within the Maple Software environment \cite{Maple} allowing for exact factorisation of an arbitrary nonsingular polynomial matrix function \cite{AAM22}. We have developed the efficient condition 
determining the stability region. 

Having this crucial information, we have constructed a sequence of polynomial matrix functions approximating the given matrix function in such a way that the distance between the initial matrix function and a consequent member of the approximation sequence decreases, being simultaneously smaller than the stability radius of the respective polynomial matrix function. This, in turn, proves the convergence of the sequence and the stability of the developed factorisation. 

The developed approach has been complimented with three different numerical examples demonstrating the method efficiency. We would like to underline, however, that theoretical estimate for the factor approximation in the case where the partial indices are stable but different (Example 6.3) is still to be developed since there are no effective estimates for the radius of neighbourhood where $A$ and $\tilde A$ admit the same $P$-normalisation.

{\bf Acknowledgements.}
NA and GM All gratefully acknowledge the support of the EU H2020 grant MSCA RISE-
2020-101008140-EffectFact. GM likes to thank the
Isaac Newton Institute for Mathematical Sciences (INI) for their support and
hospitality during the programme \lq\lq{}Mathematical theory and applications of
multiple wave scattering" (MWS), where on this paper was undertaken
and supported by EPSRC grant no. EP/R014604/1. GM
is also grateful for the funding received from the Simons Foundation that supported
their visit to INI during January-June 2023 and participation in MWS
programme.

\newpage
\pagenumbering{arabic}
\begin{center}
\section*{Supplementary material}
\setcounter{table}{0}
\renewcommand{\thetable}{\arabic{table}S}
for the article\\
A effective criterion for a stable factorisation of a strictly nonsingular $2\times2$ matrix functions. Utilisation of the 
{\sf ExactMPF} package
 \\
 N.\,V. Adukova, V.\,M. Adukov,  G. Mishuris
\end{center}

\noindent
{\bf Example 6.1}

\begin{table}[H]
{\tiny
\begin{center}
\begin{tabular}{|c|c|c|c|c|c|c|c|}
\hline
$N$&$\delta_N$&$\|(a_+^{(N)})^{-1}\|$&$\|(a_-^{(N)})^{-1}\|$&$q_N$&$\gamma_N$&$\|a_{30}-a_N\|_W$&$\|a_{40}-a_N\|_W$\\
\hline
1&4.457&6.000000000&31.00000000&829.021&$ 3.701 \,10^{6}$ &$3.252250175\,10^{-1}$&$3.252250175\,10^{-1}$\\
\hline
2&$8.914\,10^{-1}$&6.200000000&31.00000000&171.331 &$8.825 \,10^{5}$  &$1.252650255\,10^{-1}$&$1.252650255\,10^{-1}$\\
\hline
3&$1.783\,10^{-1}$&6.201000600&31.68815767&35.032 & $1.845 \,10^{5} $&$3.305033488\,10^{-3}$&$3.305033488\,10^{-3}$\\
\hline
4&$3.565\,10^{-2}$&6.213066452&31.68814324&7.020&$3.709 \,10^{4}$  &$1.285833689\,10^{-3}$&$1.285833689\,10^{-3}$\\
\hline
5&$7.131\,10^{-3}$&6.213066651&31.69452006&1.404&$7.420 \,10^{3}$  &$6.643388870\,10^{-5}$&$6.643388870\,10^{-5}$\\
\hline
6& $1.426\,10^{-3}$&6.213299946&31.69452005&$2.808\,10^{-1}$&$ 1.484 \,10^{3}$ &$2.604989431\,10^{-5}$&$2.604989431\,10^{-5}$\\
\hline
7&$2.852\,10^{-4}$&6.213299946&31.69464756&$5.617\,10^{-2}$& $2.968 \,10^{2}$ &$1.665819909\,10^{-6}$&$1.665819909\,10^{-6}$\\
\hline
8&$5.705\,10^{-5}$&6.213305778&31.69464756&$1.123\,10^{-2}$& $5.937 \,10^{1}$ &$6.562780772\,10^{-7}$&$6.562780772\,10^{-7}$\\
\hline
9&$1.141\,10^{-5}$&6.213305778&31.69465074&$2.247\,10^{-3}$& $1.187 \,10^{1}$ &$4.673619535\,10^{-8}$&$4.673619535\,10^{-8}$\\
\hline
10&$2.282\,10^{-6}$&6.213305942&31.69465074&$4.494\,10^{-4}$&2.375 &$1.847014324\,10^{-8}$&$1.847014324\,10^{-8}$\\
\hline
11&$4.564\,10^{-7}$&6.213305942&31.69465083&$8.988\,10^{-5}$& $4.750\,10^{-1}$&$1.404091080\,10^{-9}$&$1.404091080\,10^{-9}$\\
\hline
12&$9.128\,10^{-8}$&6.213305947&31.69465083&$1.797\,10^{-5}$&$9.499 \,10^{-2}$&$5.561335285\,10^{-10}$&$5.561335285\,10^{-10}$\\
\hline
13&$1.826\,10^{-8}$&6.213305947&31.69465084&$3.595\,10^{-6}$& $1.890\,10^{-2}$ &$4.417597702\,10^{-11}$&$4.417597702\,10^{-11}$\\
\hline
14&$3.651\,10^{-9}$&6.213305947&31.69465084&$7.190\,10^{-7}$&$3.799\,10^{-3}$  &$1.752644859\,10^{-11}$&$1.752644859\,10^{-11}$\\
\hline
15&$7.302\,10^{-10}$&6.213305947&31.69465084&$1.438\,10^{-7}$&$7.599\,10^{-4}$  &$1.436920153\,10^{-12}$&$1.436920153\,10^{-12}$\\
\hline
16&$1.460\,10^{-10}$&6.213305947&31.69465084&$2.876\,10^{-8}$&$1.520\,10^{-4}$ &$5.708247876\,10^{-13}$&$5.708247877\,10^{-13}$\\
\hline
17&$2.921\,10^{-11}$&6.213305947&31.69465084&$5.752\,10^{-9}$&$3.040\,10^{-5}$ &$4.792942211\,10^{-14}$&$4.792942215\,10^{-14}$\\
\hline
18&$5.842\,10^{-12}$&6.213305947&31.69465084&$1.150\,10^{-9}$ &$6.079\,10^{-6}$  &$1.905995816\,10^{-14}$&$1.905995820\,10^{-14}$\\
\hline
19&$1.168\,10^{-12}$&6.213305947&31.69465084&$2.301\,10^{-10}$ &$1.216\,10^{-6}$  &$1.630494217\,10^{-15}$&$1.630494251\,10^{-15}$\\
\hline
20&$2.337\,10^{-13}$&6.213305947&31.69465084&$4.602\,10^{-11}$&$2.432\,10^{-7}$ &$6.489430572\,10^{-16}$&$6.489430919\,10^{-16}$\\
\hline
21&$4.674\,10^{-14}$ &6.213305947 &31.69465084 &$9.204\,10^{-12}$ &$4.863\,10^{-8}$  & $5.635189764\,10^{-17}$&$5.635193238\,10^{-17}$\\
\hline
22&$9.347\,10^{-15}$ &6.213305947 &31.69465084 &$1.841\,10^{-12}$ &$9.723\,10^{-9}$  & $2.244407226\,10^{-17}$&$2.244410699\,10^{-17}$\\
\hline
23&$1.869\,10^{-15}$ &6.213305947 &31.69465084 &$3.681\,10^{-13}$ & $1.945\,10^{-9}$ &$1.973046872\,10^{-18}$ &$1.973081604\,10^{-18}$\\
\hline
24&$3.739\,10^{-16}$ &6.213305947 &31.69465084 &$7.363\,10^{-14}$ &$3.891\,10^{-10}$  & $7.862820339\,10^{-19}$&$7.863167659\,10^{-19}$\\
\hline
25&$7.478\,10^{-17}$ &6.213305947 &31.69465084 &$1.473\,10^{-14}$ &$7.782\,10^{-11}$  & $6.980519510\,10^{-20}$&$6.983992739\,10^{-20}$\\
\hline
26&$1.495\,10^{-17}$&6.213305947 &31.69465084 &$2.945\,10^{-15}$ &$1.556\,10^{-11}$  &$2.781225221\,10^{-20}$ &$2.784698451\,10^{-20}$\\
\hline
27&$2.991\,10^{-18}$ &6.213305947 &31.69465084 &$5.890\,10^{-16}$& $3.113\,10^{-12}$ & $2.460258627\,10^{-21}$&$2.495001635\,10^{-21}$\\
\hline
28&$5.982\,10^{-19}$ &6.213305947 &31.69465084 &$1.1783\,10^{-16}$ &$6.225\,10^{-13}$  &$9.605187746\,10^{-22}$ &$9.952621710\,10^{-22}$\\
\hline
29&$1.196\,10^{-19}$ &6.213305947 &31.69465084 &$2.356\,10^{-17}$ &$1.245\,10^{-13}$  &$5.402914661\,10^{-23}$ &$8.984270703\,10^{-23}$\\
\hline
30&$2.393\,10^{-20}$ &6.213305947 &31.69465084 &$4.712\,10^{-18}$ &$2.490\,10^{-14}$  &0 &$3.585235396\,10^{-23}$\\
\hline
\end{tabular}
\end{center}
}
\caption{Verification of the criterion $q_N<1$ and verification of accuracy of the estimate (\ref{estdelta}) for Example \ref{ex1A}.}\label{accdeltaN1}
\end{table}

For example, we have $a(t)\approx a_{-}^{(15)}(t) a_{+}^{(15)}(t)$. 
All elements of the factor $a_-^{(15)}(t)$:
\begin{multline*}
{\textstyle
\bigl(a_-^{(15)}(t)\bigr)_{11}=1.-\frac{5.003000600 i}{t}-\frac{0.1000500140\,10^{-1}}{t^2}+\frac{0.1000000100 i}{t^3}+}\\
{\textstyle -\frac{0.1000600220\,10^{-4}}{t^4}+\frac{0.9998000600\, 10^{-3} i}{t^5}-\frac{0.2001300540\,10^{-5}}{t^6}+\frac{0.1999400060\,10^{-4} i}{t^7}-}\\
{\textstyle \frac{5.003401521\,10^{-8}}{t^8}+\frac{4.998200040\,
10^{-7}i}{t^9}-\frac{1.400980460\,10^{-9}}{t^{10}}+
\frac{1.399439988\,10^{-8}i}{t^{11}}-}\\
{\textstyle \frac{4.203000600\,10^{-11}}{t^{12}}+
\frac{4.198200768\,10^{-10} i}{t^{13}}-\frac{1.320528106\,10^{-12}}{t^{14}} +\frac{1.320264158\,10^{-11}i}{t^{15}}}.
\end{multline*}
\begin{multline*}
{\textstyle
\bigl(a_-^{(15)}(t)\bigr)_{12}=\frac{1.000200000 i}{t}+
\frac{0.2000199880\,10^{-2}}{t^2}-\frac{0.2000400120\,10^{-1} i}{t^3}+}\\
{\textstyle
\frac{0.2000399960\,10^{-4}}{t^4}-
\frac{0.2000400160\,10^{-3} i}{t^5}+\frac{4.001000040\,10^{-7}}{t^6}-
\frac{0.4000800360\,10^{-5} i}{t^7}+}\\
{\textstyle
\frac{1.000280032\,10^{-8}}{t^8}-\frac{1.000200096\,10^{-7} i}{t^9}+\frac{2.800840136\,10^{-10}}{t^{10}}-
\frac{2.800560280\,10^{-9} i}{t^{11}}+}
\\
{\textstyle \frac{8.402638799\,10^{-12}}{t^{12}}-
\frac{8.401680864\,10^{-11} i}{t^{13}}+\frac{2.639999789\,10^{-13}}
{t^{14}}-\frac{2.640528106\,10^{-12} i}{t^{15}}}
\end{multline*}
\begin{multline*}
{\textstyle
\bigl(a_-^{(15)}(t)\bigr)_{21}=
- \frac{25.02500400 i}{t}-\frac{0.5001499900\,10^{-1}}{t^2}+\frac{0.4999000500 i}{t^3}- }\\
{\textstyle
\frac{0.5000999299\,10^{-3}}{t^4}+
\frac{0.4997000400\,10^{-2} i}{t^5}-\frac{0.1000149790\,10^{-4}}{t^6}+\frac{0.9992000700\,10^{-4}}{t^7}-}\\
{\textstyle
\frac{2.500299359\,10^{-7}}{t^8}+
\frac{0.2497700160\,10^{-5}i}{t^9}-
\frac{7.000697979\,10^{-9}}{t^{10}}+\frac{6.993000420\,10^{-8} i}{t^{11}}-
}\\
{\textstyle
\frac{2.100179772\,10^{-10}}{t^{12}}+
\frac{2.097780120\,10^{-9} i}{t^{13}}-\frac{6.602640528\,10^{-12}}{t^{14}}+\frac{6.601320792\,10^{-11} i}{t^{15}}}
\end{multline*}
\begin{multline*}
{\textstyle
\bigl(a_-^{(15)}(t)\bigr)_{22}=
1.+ \frac{5.003000600 i}{t} +\frac{0.9998998599\,10^{-2}}{t^2} -
\frac{0.9999999800\,10^{-1} i}{t^3} +}\\
{\textstyle
\frac{0.9997997799\,10^{-4}}{t^4}-\frac{0.9997998999\,10^{-3} i}{t^5}+\frac{0.1999499460\,10^{-5}}{t^6}-\frac{0.1999399700\,10^{-4} i}{t^7}+}\\
{\textstyle
\frac{4.998598479\,10^{-8}}{t^8}-\frac{4.998199079\,10^{-7} i}{t^9}+\frac{1.399579540\,10^{-9}}{t^{10}} -\frac{1.399439708\,10^{-8} i}{t^{11}}+}\\
\textstyle{\frac{4.198679400\,10^{-11}}{t^{12}} -\frac{4.198199904\,10^{-10} i}{t^{13}}
+\frac{1.319999894\,10^{-12}}{t^{14}}-\frac{1.320264053\,10^{-11} i}{t^{15}} }
\end{multline*}

\begin{table}[H]
{\scriptsize
\begin{center}
\begin{tabular}{|c|c|c|c|c|c|c|}
\hline
$N$&$\delta_+^{(N)}$&$\delta_-^{(N)}$&$\|a_+^{(40)}-a_+^{(N)}\|$
&$\|a_-^{(40)}-a_-^{(N)}\|$ &$\|a_+^{(N)}-a_+^{(N-1)}\|$&$\|a_-^{N}-a_-^{(N-1)}\|$\\
\hline
6& &948.109&$6.779868266\,10^{-6}$ &$1.398370474\,10^{-4}$&$2.673\,10^{-4}$&$5.794\,10^{-9}$\\
\hline
7& &144.467&$6.779739474\,10^{-6}$ &$3.505946296\,10^{-6}$&$1.537\,10^{-10}$&$1.365\,10^{-4}$\\
\hline
8& &27.580&$1.900289452\,10^{-7}$ &$3.505944083\,10^{-6}$&$6.684\,10^{-6}$&$3.629\,10^{-12}$\\
\hline
9& &5.466&$1.900288549\,10^{-7}$ &$9.835458009\,10^{-8}$&$1.075\,10^{-13}$&$3.414\,10^{-6}$\\
\hline
10& &1.091&$5.705084618\,10^{-9}$ &$9.835457840\,10^{-8}$&$1.871\,10^{-7}$&$2.849\,10^{-15}$\\
\hline
11&1.517&$2.182\,10^{-1}$&$5.705084542\,10^{-9}$&$2.954683944\,10^{-9}$&$9.046\,10^{-17}$&$9.559\,10^{-8}$\\
\hline
12&$1.760\,10^{-1}$&$4.363\,10^{-2}$&$1.794023946\,10^{-10}$&$2.954683942\,10^{-9}$&$5.614\,10^{-9}$&$2.567\,10^{-18}$\\
\hline
13&$3.248\,10^{-2}$&$8.726\,10^{-3}$&$1.794023946\,10^{-10}$&$9.295715063\,10^{-11}$&$8.538\,10^{-20}$&$2.868\,10^{-9}$\\
\hline
14&$6.397\,10^{-3}$&$1.745\,10^{-3}$&$5.833104901\,10^{-12}$&$9.295715063\,10^{-11}$&$1.764\,10^{-10}$&$2.538\,10^{-21}$\\
\hline
15&$1.275\,10^{-3}$&$3.490\,10^{-4}$&$5.833104901\,10^{-12}$&$3.023530855\,10^{-12}$&$8.727\,10^{-23}$&$9.013\,10^{-11}$\\
\hline
16&$2.549\,10^{-4}$&$6.981\,10^{-5}$&$1.945043487\,10^{-13}$&$3.023530855\,10^{-12}$&$5.734\,10^{-12}$&$2.682\,10^{-24}$\\
\hline
17&$5.098\,10^{-5}$&$1.396\,10^{-5}$&$1.945043487\,10^{-13}$&$1.008491079\,10^{-13}$&$9.460\,10^{-26}$&$2.929\,10^{-12}$\\
\hline
18&$1.019\,10^{-5}$&$2.792\,10^{-6}$&$6.615029120\,10^{-15}$&$1.008491079\,10^{-13}$&$1.911\,10^{-13}$&$2.982\,10^{-27}$\\
\hline
19&$2.039\,10^{-6}$&$5.585\,10^{-7}$&$6.615029120\,10^{-15}$&$3.430673589\,10^{-15}$&$1.073\,10^{-28}$&$9.764\,10^{-14}$\\
\hline
20&$4.078\,10^{-7}$&$1.117\,10^{-7}$&$2.285734191\,10^{-16}$&$3.430673589\,10^{-15}$&$6.499\,10^{-15}$&$3.448\,10^{-30}$\\
\hline
21&$8.157\,10^{-8}$ &$2.234\,10^{-8}$ &$2.285734191\,10^{-16}$ &$1.185661780\,10^{-16}$ &  &$3.320\,10^{-15}$\\
\hline
22&$1.631\,10^{-8}$ &$4.468\,10^{-9}$ &$8.001677628\,10^{-18}$ &$1.185661780\,10^{-16}$ &  &$4.116\,10^{-33}$\\
\hline
23&$3.263\,10^{-9}$ &$8.937\,10^{-10}$ &$8.001677628\,10^{-18}$ &$4.151357952\,10^{-18}$ &  &$1.147\,10^{-16}$\\
\hline
24&$6.525\,10^{-10}$ &$1.787\,10^{-10}$ &$2.831851035\,10^{-19}$ &$4.151357952\,10^{-18}$ &  &$5.044\,10^{-36}$\\
\hline
25&$1.305\,10^{-10}$ &$3.574\,10^{-11}$ &$2.831851035\,10^{-19}$ &$1.469410102\,10^{-19}$ &  &$4.014\,10^{-18}$\\
\hline
26&$2.610\,10^{-11}$ &$7.148\,10^{-12}$ &$1.011526608\,10^{-20}$ &$1.469410102\,10^{-19}$ &  &$6.317\,10^{-39}$\\
\hline
27&$5.220\,10^{-12}$ &$1.429\,10^{-12}$ &$1.011526608\,10^{-20}$ &$5.249343203\,10^{-21}$ &  &$1.420\,10^{-19}$\\
\hline
28&$2.044\,10^{-12}$ &$2.859\,10^{-13}$ &$3.64197255\,10^{-22}$ &$5.249343203\,10^{-21}$ &  &$8.059\,10^{-42}$\\
\hline
29&$2.088\,10^{-13}$ &$5.719\,10^{-14}$ &$3.64197255\,10^{-22}$ &$1.890220534\,10^{-22}$ &  &$5.072\,10^{-21}$\\
\hline
30&$4.176\,10^{-14}$ &$2.244\,10^{-14}$ &$1.320\,10^{-23}$ &$1.890220534\,10^{-22}$ &  &$1.045\,10^{-44}$\\
\hline
\end{tabular}
\end{center}
}
\caption{ Comparison $\delta_{\pm}^{(N)}$ with $\|a_{\pm}^{(40)}-a_{\pm}^{(N)}\|_W$ and the guaranteed accuracy of the approximate canonical factorisation $a(t)\approx a_-^{(N)}(t)a_+^{(N)}(t)$ for Example \ref{ex1A}.}\label{dopver1}
\end{table}

\noindent
{\bf Example 6.2}

\begin{table}[H]
{\scriptsize
\begin{center}
\begin{tabular}{|c|c|c|c|c|c|c|c|c|}
\hline
$N$&$\delta_N$&$\|(a_+^{(N)})^{-1}\|$&$\|(a_-^{(N)})^{-1}\|$&$q_N$&$\gamma_N$&$\|a_{30}-a_N\|_W$&$\|a_{40}-a_N\|_W$\\
\hline
1&55.659 & & & 333.955& $3.212 \,10^{7}$&1.292202089 &1.292202089\\
\hline
2&13.915 & & &243.509  & $1.859 \,10^{8}$&$4.603672092\,10^{-1}$&$4.603672092\,10^{-1}$\\
\hline
3&3.479 &13.00000000&26.38888889&1193.388&$2.539\,10^{11}$ &$9.653048042\,10^{-2}$&$9.653048042\,10^{-2}$\\
\hline
4&$8.697\,10^{-1}$&28.00539724&43.12791109&1050.407&$1.409\,10^{12}$ &$2.075037196 \,10^{-2}$&$2.075037196 \,10^{-2}$\\
\hline
5&$2.174\,10^{-1}$&106.2638793&193.5327650&4471.338&$2.315\,10^{15}$ &$3.112260817\,10^{-2}$&$3.112260817 \,10^{-3}$\\
\hline
6&$5.435\,10^{-2}$&106.0198599&193.0173811&1112.297&$1.423\,10^{14}$ &$5.052520022 \,10^{-4}$&$5.05252002 \,10^{-4}$\\
\hline
7&$1.359\,10^{-2}$&106.0270621&193.0175082&278.093&$8.897\,10^{12}$ &$5.588691252 \,10^{-5}$&$5.588691252\,10^{-5}$\\
\hline
8&$3.397\,10^{-3}$&106.0270621&193.0174816&69.521&$5.560\,10^{11}$ &$8.538922599 \,10^{-6}$&$8.538922599\,10^{-6}$\\
\hline
9&$8.493\,10^{-4}$&106.0230467&193.0174821&17.380 & $3.475\,10^{10}$&$6.805090541\,10^{-7}$&$6.805090541\,10^{-7}$\\
\hline
10&$2.123 \,10^{-4}$&106.0230244&193.0174825&4.345& $2.172\,10^{9}$&$1.488151391\,10^{-7}$&$1.488151391\,10^{-7}$\\
\hline
11&$5.308\,10^{-5}$&106.0230225&193.0174825&1.086&$1.357\,10^{8}$ &$7.911542747\,10^{-9}$&$7.911542747\,10^{-9}$\\
\hline
12&$1.327\,10^{-5}$&106.0230224&193.0174825&$2.716\,10^{-1}$& $8.484\,10^{6}$&$3.854190119\,10^{-9}$&$3.854190119\,10^{-9}$\\
\hline
13&$3.317 \,10^{-6}$&106.0230223&193.0174825&$6.789\,10^{-2}$& $5.303\,10^{5}$&$1.546768305\,10^{-10}$&$1.546768305\,10^{-10}$\\
\hline
14&$8.294\,10^{-7}$&106.0230223&193.0174825&$1.697\,10^{-2}$&$3.314\,10^{4}$ &$1.322654352\,10^{-10}$&$1.322654352\,10^{-10}$\\
\hline
15&$2.073\,10^{-7}$&106.0230223&193.0174825&$4.243\,10^{-3}$ &$2.071 \,10^{3}$ &$5.000553058\,10^{-12}$&$5.000553058\,10^{-12}$\\
\hline
16& $5.184\,10^{-8}$&106.0230223&193.0174825&$1.061\,10^{-3}$& $1.295 \,10^{2}$&$4.906795682\,10^{-12}$&$4.906795683\,10^{-12}$\\
\hline
17&$1.296\,10^{-8}$&106.0230223&193.0174825&$2.652\,10^{-4}$& 8.091&$1.854978316\,10^{-13}$&$1.854978320\,10^{-13}$\\
\hline
18&$3.240\,10^{-9}$&106.0230223&193.0174825&$6.630\,10^{-5}$&$5.057\,10^{-1}$  &$1.851904638\,10^{-13}$&$1.851904642\,10^{-13}$\\
\hline
19&$8.099\,10^{-10}$&106.0230223&193.0174825&$1.657\,10^{-5}$&$3.161\,10^{-2}$ &$7.038266444\,10^{-15}$&$7.038266829\,10^{-15}$\\
\hline
20&$2.025\,10^{-10}$&106.0230223&193.0174825&$4.144\,10^{-6}$&$1.975\,10^{-3}$ &$7.037455514\,10^{-15}$&$7.037455898\,10^{-15}$\\
\hline
21&$5.062\,10^{-11}$&106.0230223&193.0174825&$1.036\,10^{-6}$&$1.235\,10^{-4}$ &$2.687290630\,10^{-16}$&$2.687294473\,10^{-16}$\\
\hline
22&$1.265\,10^{-11}$&106.0230223&193.0174825&$2.590\,10^{-7}$&$7.716\,10^{-6}$ &$2.687273040\,10^{-16}$&$2.687276884\,10^{-16}$\\
\hline
23&$3.164\,10^{-12}$&106.0230223&193.0174825&$6.475\,10^{-7}$&$4.823\,10^{-7}$ &$1.030169742\,10^{-17}$&$1.030208176\,10^{-17}$\\
\hline
24&$7.910\,10^{-13}$&106.0230223&193.0174825&$1.619\,10^{-8}$&$3.014\,10^{-8}$ &$1.030169423\,10^{-17}$&$1.030207857\,10^{-17}$\\
\hline
25&$1.977\,10^{-13}$&106.0230223&193.0174825&$4.047\,10^{-9}$& $1.884\,10^{-9}$&$3.958771025\,10^{-19}$&$3.962619441\,10^{-19}$\\
\hline
26&$4.943\,10^{-14}$&106.0230223&193.0174825&$1.012\,10^{-9}$&$1.177\,10^{-10}$ &$3.958770976\,10^{-19}$&$3.962619392\,10^{-19}$\\
\hline
27& $1.236\,10^{-14}$&106.0230223&193.0174825&$2.529\,10^{-10}$& $7.359\,10^{-12}$&$1.491648249\,10^{-20}$&$1.528533066\,10^{-20}$\\
\hline
28&$3.090\,10^{-15}$&106.0230223&193.0174825&$6.323\,10^{-11}$&$4.599\,10^{-13}$ &$1.491648248\,10^{-20}$&$1.528533066\,10^{-20}$\\
\hline
29&$7.724\,10^{-16}$&106.0230223&193.0174825&$1.581\,10^{-11}$&$2.875\,10^{-14}$ &$7.617715140\,10^{-33}$&$5.910646257\,10^{-22}$\\
\hline
30&$1.931\,10^{-16}$ &106.0230223&193.0174825&$3.952\,10^{-12}$&$1.797\,10^{-15}$ & 0 &$5.911\,10^{-22}$\\
\hline
31&$4.828\,10^{-17}$ & & &  & &  &$2.290481164\,10^{-23}$\\
\hline
32&$1.207\,10^{-17}$ & & &  & &  &$2.290481164\,10^{-23}$\\
\hline
33&$3.017\,10^{-18}$  & & & & &  &$8.892504277\,10^{-25}$\\
\hline
34&$7.543\,10^{-19}$ & & &  & &  &$8.892504277\,10^{-25}$\\
\hline
35&$1.885\,10^{-19}$ & & &  & &  & $3.455038998\,10^{-26}$\\
\hline
36&$4.714\,10^{-20}$  & & & & &  & $3.455038998\,10^{-26}$\\
\hline
37& $1.179\,10^{-20}$ & & & & &  & $1.314236521\,10^{-27}$\\
\hline
38&$2.946\,10^{-21}$ & & &  & &  & $1.314236521\,10^{-27}$\\
\hline
39&$7.366\,10^{-22}$  & & & & &  & $2.476508000\,10^{-48}$\\
\hline
40&$1.842\,10^{-22}$ & & &  & &  & 0\\
\hline
\end{tabular}
\end{center}
}
\caption{Verification of the criterion $q_N<1$ and verification of accuracy of the estimate (\ref{estdelta}) for Example \ref{ex2}.}\label{accdeltaEx2}
\end{table}

\begin{table}[H]
{\scriptsize
\begin{center}
\begin{tabular}{|c|c|c|c|c|}
\hline
$N$&$\delta_+^{(N)}$&$\delta_-^{(N)}$&$\|a_+^{(40)}-a_+^{(N)}\|$
&$\|a_-^{(40)}-a_-^{(N)}\|$\\
\hline
12& &$5.164\,10^{4}$&$4.878\,10^{-8}$ &$5.206370288\,10^{-11}$ \\
\hline
13& &$1.009\,10^{4}$&$3.494\,10^{-9}$ &$1.934759225\,10^{-12}$ \\
\hline
14& &$2.391\,10^{3}$&$2.334\,10^{-10}$ &$1.933162965\,10^{-12}$ \\
\hline
15& &$5.902\,10^{2}$&$1.462\,10^{-11}$ &$7.249988247\,10^{-14}$ \\
\hline
16& &$1.471\,10^{2}$&$8.617\,10^{-13}$ &$7.249964290\,10^{-14}$ \\
\hline
17& &$3.674\,10^{1}$&$4.795\,10^{-14}$ &$2.73906237\,10^{-15}$ \\
\hline
18&$1.022\,10^{5}$&9.184&$2.527\,10^{-15}$ &$2.739062342\,10^{-15}$ \\
\hline
19&$1.305\,10^{4}$&2.296&$1.265\,10^{-16}$ &$1.040901891\,10^{-16}$ \\
\hline
20&$3.165\,10^{3}$&$5.740\,10^{-1}$&$6.033\,10^{-18}$ &$1.040901891\,10^{-16}$ \\
\hline
21&$7.898\,10^{2}$&$1.435\,10^{-1}$ &$2.745\,10^{-19}$ &$3.974538060\,10^{-18}$ \\
\hline
22&$1.974\,10^{2}$& $3.587\,10^{-2}$ &$1.195\,10^{-20}$ &$3.974538060\,10^{-18}$ \\
\hline
23&$4.935\,10^{1}$&$8.968\,10^{-3}$&$4.983\,10^{-22}$ &$1.523633108\,10^{-19}$ \\
\hline
24&$1.234\,10^{1}$&$2.242\,10^{-3}$&$1.995\,10^{-23}$ &$1.523633108\,10^{-19}$ \\
\hline
25&3.085&$5.605\,10^{-4}$ &$7.680\,10^{-25}$ &$5.860325899\,10^{-21}$ \\
\hline
26&$7.711\,10^{-1}$&$1.401\,10^{-4}$&$2.846\,10^{-26}$ &$5.860325899\,10^{-21}$ \\
\hline
27&$1.928\,10^{-1}$&$3.503\,10^{-5}$ &$1.017\,10^{-27}$ &$2.260477723\,10^{-22}$ \\
\hline
28&$4.820\,10^{-2}$ &$8.758\,10^{-6}$ &$3.510\,10^{-29}$ &$2.260477723\,10^{-22}$ \\
\hline
29&$1.205\,10^{-2}$ &$2.189\,10^{-6}$ &$1.171\,10^{-30}$ &$8.740726716\,10^{-24}$ \\
\hline
30&$3.012\,10^{-3}$&$5.474\,10^{-7}$ &$3.779\,10^{-32}$ & $8.740726716\,10^{-24}$\\
\hline
\end{tabular}
\end{center}
}
\caption{Comparison $\delta_{\pm}^{(N)}$ with 
$\|a_{\pm}^{(40)}-a_{\pm}^{(N)}\|_W$ for Example \ref{ex2}. }\label{compEx2}
\end{table}

\noindent
{\bf Example 6.3}

\begin{table}[H]
{\scriptsize
\begin{center}
\begin{tabular}{|c|c|c|c|c|c|c|}
\hline
$N$&$\delta_N$&$\|(a_+^{(N)})^{-1}\|$&
$\|(a_-^{(N)})^{-1}\|$&$q_N$&$\|a_{30}-a_N\|_W$&$\|a_{40}-a_N\|_W$\\
\hline
1&$4.464\,10^{5}$  & & & &2.031 &2.031\\
\hline
2&$4.464\,10^{4}$ & & & & $5.062\,10^{-1}$ &$5.062\,10^{-1}$\\
\hline
3&$4.464\,10^{3}$ & & & & $1.095\,10^{-1}$ &$1.095\,10^{-1}$ \\
\hline
4&446.390&$7.488774053\,10^{5}$&$3.49502\,10^{5}$&$2.337\,10^{14}$&$2.070\,10^{-2}$ &$2.070\,10^{-2}$\\
\hline
5&44.639&$9.993003878\,10^{6}$&$4.548002871\,10^{6}$&$4.057\,10^{15}$&$3.366\,10^{-3}$ &$3.366\,10^{-3}$ \\
\hline
6&4.464&$2.258318561\,10^{7}$&$1.151727289\,10^{7}$&$2.322\,10^{15}$&$4.745\,10^{-4}$&$4.745\,10^{-4}$\\
\hline
7&$4.464\,10^{-1}$&$2.484150034\,10^{8}$&$1.266899999\,10^{8}$&$2.810\,10^{16}$&$5.878\,10^{-5}$&$5.878\,10^{-5}$\\
\hline
8&$4.464\,10^{-2}$&$2.484150037\,10^{8}$&$1.266900000\,10^{8}$&$2.810\,10^{15}$&$6.486\,10^{-6}$&$6.486\,10^{-6}$\\
\hline
9&$4.464\,10^{-3}$&$2.484150037\,10^{8}$&$1.266900000\,10^{8}$&$2.810\,10^{14}$&$6.451\,10^{-7}$ &$6.451\,10^{-7}$ \\
\hline
10&$4.464\,10^{-4}$&$2.484150037\,10^{8}$&$1.266900000\,10^{8}$&$2.810\,10^{13}$&$5.838\,10^{-8}$ &$5.838\,10^{-8}$\\
\hline
11&$4.464\,10^{-5}$&$2.484150037\,10^{8}$&$1.266900000\,10^{8}$&$2.810\,10^{12}$&$4.846\,10^{-9}$ &$4.846\,10^{-9}$ \\
\hline
12&$4.464\,10^{-6}$&$2.484150037\,10^{8}$&$1.266900000\,10^{8}$&$2.810\,10^{11}$&$3.716\,10^{-10}$ &$3.716\,10^{-10}$ \\
\hline
13&$4.464\,10^{-7}$&$2.484150037\,10^{8}$&$1.266900000\,10^{8}$&$2.810\,10^{10}$&$2.647\,10^{-11}$ &$2.647\,10^{-11}$ \\
\hline
14&$4.464\,10^{-8}$&$2.484150037\,10^{8}$&$1.266900000\,10^{8}$&$2.810\,10^{9}$&$1.760\,10^{-12}$ &$1.760\,10^{-12}$ \\
\hline
15&$4.464\,10^{-9}$&$2.484150037\,10^{8}$&$1.266900000\,10^{8}$&$2.810\,10^{8}$ &$1.098\,10^{-13}$ &$1.098\,10^{-13}$ \\
\hline
16&$4.464\,10^{-10}$&$2.484150037\,10^{8}$&$1.266900000\,10^{8}$&$2.810\,10^{7}$&$6.445\,10^{-15}$&$6.445\,10^{-15}$ \\
\hline
17&$4.464\,10^{-11}$&$2.484150037\,10^{8}$&$1.266900000\,10^{8}$&$2.810\,10^{6}$&$3.574\,10^{-16}$ &$3.574\,10^{-16}$ \\
\hline
18&$4.464\,10^{-12}$&$2.484150037\,10^{8}$&$1.266900000\,10^{8}$&$2.810\,10^{5}$&$1.878\,10^{-17}$ &$1.878\,10^{-17}$ \\
\hline
19&$4.464\,10^{-13}$&$2.484150037\,10^{8}$&$1.266900000\,10^{8}$&$2.810\,10^{4}$&$9.379\,10^{-19}$ &$9.379\,10^{-19}$ \\
\hline
20&$4.464\,10^{-14}$&$2.484150037\,10^{8}$&$1.266900000\,10^{8}$&$2.810\,10^{3}$&$4.461\,10^{-20}$ &$4.461\,10^{-20}$\\
\hline
21&$4.464\,10^{-15}$&$2.484150037\,10^{8}$&$1.266900000\,10^{8}$&$2.810\,10^{2}$&$2.025\,10^{-21}$ &$2.025\,10^{-21}$ \\
\hline
22&$4.464\,10^{-16}$&$2.484150037\,10^{8}$&$1.266900000\,10^{8}$&$2.810\,10^{1}$&$8.796\,10^{-23}$ &$8.796\,10^{-23}$\\
\hline
23&$4.464\,10^{-17}$&$2.484150037\,10^{8}$&$1.266900000\,10^{8}$&2.810&$3.662\,10^{-24}$ &$3.662\,10^{-24}$\\
\hline
24&$4.464\,10^{-18}$&$2.484150037\,10^{8}$&$1.266900000\,10^{8}$&$2.810\,10^{-1}$&$1.463\,10^{-25}$ &$1.463\,10^{-25}$ \\
\hline
25&$4.464\,10^{-19}$&$2.484150037\,10^{8}$&$1.266900000\,10^{8}$&$2.810\,10^{-2}$&$5.624\,10^{-27}$ &$5.624\,10^{-27}$\\
\hline
26&$4.464\,10^{-20}$&$2.484150037\,10^{8}$&$1.266900000\,10^{8}$&$2.810\,10^{-3}$&$2.081\,10^{-28}$ &$2.081\,10^{-28}$ \\
\hline
27&$4.464\,10^{-21}$&$2.484150037\,10^{8}$&$1.266900000\,10^{8}$&$2.810\,10^{-4}$&$7.428\,10^{-30}$ &$7.428\,10^{-30}$ \\
\hline
28&$4.464\,10^{-22}$&$2.484150037\,10^{8}$&$1.266900000\,10^{8}$&$2.810\,10^{-5}$&$2.558\,10^{-31}$ &$2.560\,10^{-31}$ \\
\hline
29&$4.464\,10^{-23}$&$2.484150037\,10^{8}$&$1.266900000\,10^{8}$&$2.810\,10^{-6}$&$8.375\,10^{-33}$ &$8.528\,10^{-33}$\\
\hline
30&$4.464\,10^{-24}$&$2.484150037\,10^{8}$&$1.266900000\,10^{8}$&$2.810\,10^{-7}$&0 & $2.749\,10^{-34}$\\
\hline
31& & & & & & $8.586\,10^{-36}$ \\
\hline
32& & & & & & $2.601\,10^{-37}$ \\
\hline
33& & & & & & $7.645\,10^{-39}$ \\
\hline
34& & & & & & $2.183\,10^{-40}$ \\
\hline
35& & & & & & $6.062\,10^{-42}$ \\
\hline
36& & & & & & $1.638\,10^{-43}$ \\
\hline
37& & & & & & $4.308\,10^{-45}$ \\
\hline
38& & & & & & $1.104\,10^{-46}$ \\
\hline
39& & & & & & $2.722\,10^{-48}$ \\
\hline
40& & & & & & 0 \\
\hline
\end{tabular}
\end{center}
}
\caption{Verification of the criterion $q_N<1$  and verification of accuracy of the estimate (\ref{estdelta}) for Example \ref{ex3}.}\label{crit3norm}
\end{table}

\end{document}